\numberwithin{equation}{section}
\newtheorem{thm}[subsection]{Theorem}
\newtheorem{prop}[subsection]{Proposition}
\newtheorem{cor}[subsection]{Corollary}
\newtheorem{lemma}[subsection]{Lemma}
\newtheorem*{thm*}{Theorem}
\newtheorem*{conj*}{Conjecture}
\newtheorem*{lemma*}{Lemma}
\theoremstyle{definition}  % Bold headings and Roman body text.
\newtheorem{example}[subsection]{Example}
\newtheorem{remark}[subsection]{Remark}
\newcommand{\chapter}{\section}
\newcommand{\Smash}             {\wedge}
\newcommand{\cat}{\EuScript}    % Use \EuScript to name a category.
\newcommand{\cU}{{\cat U}}
\newcommand{\field}[1]  {\mathbb #1} % Use blackboard bold for these sets
\newcommand{\A}         {\field A}
\newcommand{\R}         {\field R}
\newcommand{\Z}         {\field Z}
\newcommand{\bP}        {\field P}
\DeclareMathOperator{\Ext}{Ext}
\newcommand{\ra}{\rightarrow}                   % right arrow
\newcommand{\period}    {{\makebox[0pt][l]{\hspace{2pt} .}}}
\newcommand{\comma}     {{\makebox[0pt][l]{\hspace{2pt} ,}}}
\newcommand{\tuborg}{\left\{\begin{array}{ll}}
\newcommand{\sluttuborg}{\end{array}\right.}
\begin{document}

\title[On the equivariant cohomology of rotation groups]{On the equivariant cohomology of rotation groups and Stiefel manifolds}
\author{William Kronholm}
\address{Department of Mathematics\\ Whittier College\\ Whittier, CA 90608 }

\begin{abstract}In this paper, we compute the $RO(\mathbb{Z}/2)$-graded equivariant cohomology of rotation groups and Stiefel manifolds with particular involutions.
\end{abstract}

% Default to current date.
\date{\today}

\maketitle

\tableofcontents

%%%%%%%%%%%%%%%%%%%%%%%%%%%%%%%%%%%%%%
\vspace*{-0.4in}
\section*{Introduction} Lewis, May, and McClure extend ordinary Bredon cohomology theories to $RO(G)$-graded theories in \cite{LMM}. Equivariant cohomology and homotopy theories have had a wide array of applications, in part because of their close connections with Voevodsky's motivic cohomology theory. In spite of this, there have been very few published computations of the equivariant cohomology of specific spaces. Some examples of computations are in \cite{FREENESS}, \cite{ROGSS}, and \cite{Shulman}. This paper provides computations of an important class of $\Z/2$-spaces. In particular, we compute the cohomology of the special orthogonal groups and Stiefel manifolds with particular $\Z/2$ actions. The main results are stated in Theorem \ref{thm:SOpq_algebra} and Theorem \ref{thm:stiefel_algebra}.

Section \ref{sec:prelim} provides some background information on $RO(\Z/2)$-graded cohomology and establishes some definitions and notation which will be used throughout the paper.

In Section \ref{sec:rotation} we introduce an equivariant cell structure on $SO(p,q)$, the group of rotations of $p$-dimensional Euclidean space endowed with a particular action of $\Z/2$, and use this cell structure to determine the cohomology of $SO(p, q)$ as an algebra over the cohomology of a point with constant $\underline{\Z/2}$ Mackey functor coefficients.

In Section \ref{sec:stiefel} we use the cell structure on $SO(p, q)$ from section \ref{sec:rotation} to put an equivariant cell structure on $V_q(\R^{p, q})$, the Stiefel manifold of $q$-frames in the $\Z/2$-representation $\R^{p, q}$ with action inherited from $\R^{p, q}$. The cell structure on the Stiefel manifold is compatible with the one on $SO(p, q)$ and allows for the cohomology algebra structure of $V_q(\R^{p, q})$ to be deduced from that of $SO(p, q)$. 

Finally, we discuss some possible further directions of study in Section \ref{sec:further}.

%%%%%%%%%%%%
\section{Preliminaries}
\label{sec:prelim} This section contains some of the background information
and notations that will be used throughout the paper.  In this
section, $G$ can be any finite group unless otherwise specified.

Given a $G$-representation $V$, let $D(V)$ and $S(V)$ denote the
unit disk and unit sphere, respectively, in $V$ with action induced
by that on $V$.  A \bf$\mathrm{Rep}(G)$-complex \rm is a $G$-space $X$ with a
filtration $X^{(n)}$ where $X^{(0)}$ is a disjoint union of
$G$-orbits and $X^{(n)}$ is obtained from $X^{(n-1)}$ by attaching
cells of the form $D(V_\alpha)$ along maps $f_\alpha \colon
S(V_\alpha) \ra X^{(n-1)}$ where $V_\alpha$ is an $n$-dimensional
real representation of $G$.  The space $X^{(n)}$ is referred to as
the \bf $n$-skeleton \rm of $X$, and the filtration is referred to as a \bf cell
structure\rm . In addition, $S^V$ will denote the one-point compactification of $V$ with fixed base point at infinity. If $X$ is a $\mathrm{Rep}(G)$-complex, then $X^{(n)}/X^{(n-1)} \cong \bigvee_i S^{V_i}$ where each $V_i$ is an $n$-dimensional $G$-representation.

The reader is referred to \cite{Alaska} for details on $RO(G)$-graded cohomology theories. Briefly, these are theories graded on the Grothendieck ring of virtual representations of the group $G$. The natural coefficients for such theories are Mackey functors. In fact, $RO(G)$-graded cohomology theories can naturally be thought of as ($RO(G)$-graded) Mackey functor valued. However, this paper will focus on abelian group valued cohomology theories.

According to \cite{Alaska}, each Mackey functor $M$ uniquely
determines an $RO(G)$-graded cohomology theory characterized by
\begin{enumerate}
\item $H^n(G/H;M) =\begin{cases}
M(G/H) & \text{ if } n=0 \\
0 & \text{otherwise}\end{cases}$
\item The map $H^0(G/K;M) \ra H^0(G/H;M)$ induced by $i \colon G/H \ra G/K$ is the transfer map $i^*$ in the Mackey functor.
\end{enumerate}
Here, $n$ denotes the $n$-dimensional trivial $G$-representation.

In this paper, the group $G$ will be $\Z/2$. For the precise definition of a $\Z/2$-Mackey functor, the
reader is referred to \cite{LMM} or \cite{DuggerKR}.  The
data of a $\Z/2$-Mackey functor $M$ are encoded in a diagram like the one
below.

\[\xymatrix{ M(\Z/2) \ar@(ur,ul)[]^{t^*} \ar@/^/[r]^(0.6){i_*} & M(e) \ar@/^/[l]^(0.4){i^*}} \]

The maps must satisfy the following four conditions:
\begin{enumerate}
\item $(t^*)^2 = id$,
\item $t^*i^*=i^*$,
\item $i_*(t^*)^{-1}=i_*$,
\item $i^*i_*=id+t^*$.
\end{enumerate}

A $p$-dimensional real $\Z/2$-representation $V$ decomposes as
$V=(\R^{1,0})^{p-q} \oplus (\R^{1,1})^q =\R^{p,q}$ where $\R^{1,0}$
is the trivial 1-dimensional real representation of $\Z/2$ and $\R^{1,1}$ is the nontrivial
1-dimensional real representation of $\Z/2$.  Thus the $RO(\Z/2)$-graded theory is
a bigraded theory, one grading measuring dimension and the other
measuring the number of ``twists''.  In this case, we write
$H^{V}(X;M)=H^{p,q}(X;M)$ for the $V^{\text{th}}$ graded component
of the $RO(\Z/2)$-graded equivariant cohomology of $X$ with
coefficients in a Mackey functor $M$. Similarly, we will write $S^{p,q}$ for $S^V$ when $V=\R^{p,q}$.

In this paper, the Mackey functor will always be constant $M=\underline{\Z/2}$ which has the
following diagram.

\[\xymatrix{ \Z/2 \ar@(ur,ul)[]^{id} \ar@/^/[r]^{0} & \Z/2 \ar@/^/[l]^{id}} \]

\noindent Because this paper only considers this constant Mackey functor, the coefficients will be suppressed from the notation and we write $H^{p,q}(X)$ for $H^{p,q}(X;\underline{\Z/2})$.

With these constant coefficients, the $RO(\Z/2)$-graded cohomology
of a point is given by the picture in Figure \ref{fig:pt}.

\begin{figure}[htpb]
\centering
\begin{picture}(100,100)(-100,-100)
\put(-100,-50){\vector(1,0){100}}
%\put(175,5){\vector(-1,0){350}}
\put(-50,-100){\vector(0,1){100}}
%\put(5,100){\vector(0,-1){200}}

%(0,0)-cell
\put(-50, -51){\line(0,1){40}} \put(-50, -51){\line(1,1){40}}
\put(-50, -91){\line(0,-1){20}} \put(-50, -91){\line(-1,-1){20}}
\put(-50, -51){\circle*{2}}

%ticks
\multiput(-90,-51)(20,0){5}{\line(0,1){3}}
\multiput(-52,-90)(0,20){5}{\line(1,0){3}}
\put(-49,-57){$\scriptscriptstyle{0}$}
\put(-29,-57){$\scriptscriptstyle{1}$}
\put(-9,-57){$\scriptscriptstyle{2}$}
\put(-69,-57){$\scriptscriptstyle{-1}$}
\put(-89,-57){$\scriptscriptstyle{-2}$}
\put(-57,-47){$\scriptscriptstyle{0}$}
\put(-57,-27){$\scriptscriptstyle{1}$}
\put(-57,-7){$\scriptscriptstyle{2}$}

\put(-48,-35){$\tau$} \put(-30,-35){$\rho$} \put(-10,-15){$\rho^2$}
\put(-30,-17){$\tau\rho$} \put(-58, -90){$\theta$} \put(-48,
-102){$\frac{\theta}{\tau}$} \put(-78, -102){$\frac{\theta}{\rho}$}

\put(-58,0){${q}$} \put(0,-60){${p}$}

\multiput(-50, -30)(20,20){2}{\circle*{2}} \multiput(-50,
-50)(20,20){3}{\circle*{2}} \put(-50, -10){\circle*{2}} \put(-50,
-90){\circle*{2}} \put(-50, -110){\circle*{2}} \put(-70,
-110){\circle*{2}}

\end{picture}

\caption{$H^{*,*}(pt;\Z/2)$} \label{fig:pt}
\end{figure}

Every lattice point in the picture that is inside the indicated
cones represents a group isomorphic to $\Z/2$.  The \bf top cone \rm is a
polynomial algebra on the nonzero elements $\rho \in
H^{1,1}(pt)$ and $\tau \in
H^{0,1}(pt)$.  The nonzero element $\theta \in H^{0,-2}(pt)$ in the \bf bottom
cone \rm is infinitely divisible by both $\rho$ and $\tau$.  The
cohomology of $\Z/2$ is easier to describe:
$H^{*,*}(\Z/2)=\Z/2[t, t^{-1}]$ where $t \in
H^{0,1}(\Z/2)$.  Details can be found in
\cite{DuggerKR} and \cite{Caruso}. From here on out, we will denote $H^{*,*}(pt; \underline{\Z/2})$ by $H\underline{\Z/2}$.

Note that the suspension axioms completely determine the cohomology for the spheres $S^{p,q}$. If $(p,q) \neq (1,1)$, then $H^{*,*}(S^{p,q})=H\underline{\Z/2}[x]/x^2$ where $x$ is in bidegree $(p,q)$. In the special case of $S^{1,1}$ we have the following proposition. (See also \cite{FREENESS}.)

\begin{prop}
As a $H\underline{\Z/2}$-module, $H^{*,*}(S^{1,1})$ is free with a single
generator $a$ in degree $(1,1)$.  As an algebra, $H^{*,*}(S^{1,1}) \cong
H\underline{\Z/2}[a]/(a^2 = \rho a)$.
\end{prop}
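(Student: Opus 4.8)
The plan is to realize $S^{1,1}$ through the cofiber sequence attached to the representation $\R^{1,1}$, compute its cohomology from the induced long exact sequence, and then pin down the single nonformal product using the Euler class. First I would take the cofiber sequence $S(\R^{1,1})_+ \to D(\R^{1,1})_+ \to S^{1,1}$. Since $D(\R^{1,1})$ is equivariantly contractible to the origin (a fixed point), $D(\R^{1,1})_+ \simeq S^{0,0}$, while $S(\R^{1,1}) = \{\pm 1\}$ is a single free orbit, so $S(\R^{1,1})_+ = (\Z/2)_+$. Applying reduced cohomology gives a long exact sequence of $H\underline{\Z/2}$-modules in which the two outer terms are already known, namely $\tilde H^{*,*}(S^{0,0}) = H\underline{\Z/2}$ and $\tilde H^{*,*}((\Z/2)_+) = H^{*,*}(\Z/2) = \Z/2[t,t^{-1}]$, and the map between them is the restriction map $\iota^*$. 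Thus the computation of $\tilde H^{*,*}(S^{1,1})$ reduces to understanding $\ker \iota^*$ and $\coker \iota^*$.

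Next I would compute $\iota^*\colon H\underline{\Z/2} \to \Z/2[t,t^{-1}]$. As a ring map it sends $\tau \mapsto t$ and $\rho \mapsto 0$ (the target vanishes in positive topological degree), and since $\theta$ together with the rest of the bottom cone is infinitely divisible by $\rho$, all of it must map to $0$ as well; hence the image is exactly $\Z/2[t] \subset \Z/2[t,t^{-1}]$. Because $H^{*,*}(\Z/2)$ is concentrated in topological degree $0$, feeding $\ker \iota^*$ and $\coker \iota^*$ back into the long exact sequence shows that $\tilde H^{n,q}(S^{1,1})$ agrees bidegree by bidegree with $H^{n-1,q-1}(pt)$; in particular $\tilde H^{1,1}(S^{1,1}) \cong \Z/2$, whose generator I name $a$. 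The one point requiring care here is the overlap of the two cones of $H\underline{\Z/2}$ near the origin, which is precisely what makes $S^{1,1}$ exceptional among the spheres $S^{p,q}$.

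To upgrade this additive statement to freeness over $H\underline{\Z/2}$, I would show that multiplication by $a$, viewed as a degree-$(1,1)$ map $H\underline{\Z/2} \to \tilde H^{*,*}(S^{1,1})$, is an isomorphism. Surjectivity (equivalently, that the bidegrees match) follows from the previous paragraph, so it remains to prove injectivity, and this is the genuine obstacle: over the negative cone the fixed-point restriction sees nothing because $\rho$ acts as zero. I would circumvent it with the external product. Under $S^{1,1} \wedge S^{1,1} = S^{2,2}$ the class $a \otimes a$ maps to the generator $b$ of $\tilde H^{*,*}(S^{2,2})$, which is free since $(2,2) \neq (1,1)$ (multiplicativity of the fundamental class, i.e.\ the suspension isomorphism). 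If $xa = 0$ for some $x \in H\underline{\Z/2}$, then $xb$ is the image of $x(a \otimes a) = (xa) \otimes a = 0$, forcing $x = 0$ by freeness of $\tilde H^{*,*}(S^{2,2})$; hence multiplication by $a$ is injective and $\tilde H^{*,*}(S^{1,1})$ is free on $a$. (Alternatively one may invoke the freeness theorem of \cite{FREENESS}.) I expect this module-freeness step, rather than the additive computation or the final product, to demand the most care.

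Finally I would determine the product. The collapse $S^{0,0} \simeq D(\R^{1,1})_+ \to S^{1,1}$ is homotopic to the inclusion $j$ of the non-basepoint fixed point, and the long exact sequence shows that $j^*\colon \tilde H^{1,1}(S^{1,1}) \to H^{1,1}(pt) = \Z/2\{\rho\}$ is an isomorphism (the target of $\iota^*$ vanishes there), so $j^*(a) = \rho$; this identifies $a$ with the Euler class of $\R^{1,1}$. By freeness $\tilde H^{2,2}(S^{1,1}) = \Z/2\{\rho a\}$, so $a^2$ is either $0$ or $\rho a$. Since $j^*$ is a ring map, $j^*(a^2) = (j^*a)^2 = \rho^2 \neq 0$ in $H^{2,2}(pt)$, whence $a^2 \neq 0$ and therefore $a^2 = \rho a$. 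Adjoining the unit gives the algebra structure $H\underline{\Z/2}[a]/(a^2 = \rho a)$.
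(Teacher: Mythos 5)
Your proof is correct, but it follows a genuinely different route from the paper's. The paper dismisses the module statement as immediate --- the $RO(\Z/2)$-graded suspension axiom applies to $S^{1,1}$ exactly as to any $S^V$, giving $\tilde{H}^{p,q}(S^{1,1}) \cong H^{p-1,q-1}(pt)$ freely on the fundamental class --- whereas you re-derive it from the cofiber sequence $S(\R^{1,1})_+ \to D(\R^{1,1})_+ \to S^{1,1}$ and then prove freeness via the external product into $S^{2,2}$. Two remarks on that portion: first, your parenthetical claim that surjectivity of multiplication by $a$ is ``equivalently, that the bidegrees match'' is not right as stated; what actually closes the argument is that each bidegree is a $\Z/2$-vector space of the same finite dimension on both sides, so your injectivity argument yields surjectivity --- the logical order should be injectivity first. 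Second, the freeness of $\tilde{H}^{*,*}(S^{2,2})$ that you invoke is itself an instance of the suspension axiom, which applies verbatim to $S^{1,1}$; the only genuinely exceptional feature of $S^{1,1}$ is multiplicative, so your caution about the overlap of the cones, and the whole detour through $S^{2,2}$, is unnecessary (though not circular, since $S^{2,2}$ can be suspended directly from $S^0$). Where the two proofs really diverge is the relation $a^2 = \rho a$: the paper represents both $a^2$ and $\rho a$ as maps $S^{1,1} \to S^{2,2} \to K(\Z/2(2),2)$ --- the diagonal versus the vertical-axis inclusion of $(\R^{1,1})^+$ into $(\R^{2,2})^+$ --- and exhibits the explicit equivariant homotopy $H(x,t) = (tx,x)$; you instead restrict along the fixed-point inclusion given by the collapse $D(\R^{1,1})_+ \to S^{1,1}$, read off $j^*(a) = \rho$ from the long exact sequence, and use that $j^*$ is multiplicative together with $\rho^2 \neq 0$ in $H^{*,*}(pt)$ to rule out $a^2 = 0$, since freeness leaves only the two options $0$ and $\rho a$. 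Your version buys independence from representability: it needs only naturality of cup products and the ring structure of the cohomology of a point, not the identification of $S^{1,1}$ as a $K(\Z(1),1)$; the paper's version is more geometric and shows directly which homotopy realizes the relation, a picture that also explains the analogous relations on smash powers.
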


\begin{proof}

The statement about the module structure is immediate.

Since $S^{1,1}$ is a $K(\Z(1),1)$, we can consider $a \in
[S^{1,1},S^{1,1}]$ as the class of the identity and $\rho \in [pt,
S^{1,1}]$ as the inclusion. Then $a^2$ is the composite

$\xymatrix{a^2 \colon S^{1,1} \ar[r]^(0.45)\Delta & S^{1,1} \Smash
S^{1,1} \ar[r]^(0.6){a \Smash a} & S^{2,2} \ar[r] & K(\Z/2(2),2)}$.

\noindent Similarly, $\rho a$ is the composite

$\xymatrix{\rho a \colon S^{1,1} \ar[r] & S^{0,0} \Smash S^{1,1}
\ar[r]^(0.6){\rho \Smash a} & S^{2,2} \ar[r] & K(\Z/2(2),2)}$.

\noindent The claim is that these two maps are homotopic.
Considering the spheres involved as one point compactifications of
the corresponding representations, the map $a^2$ is inclusion of
$(\R^{1,1})^+$ as the diagonal in $(\R^{2,2})^+$ and $\rho a$ is
inclusion of $(\R^{1,1})^+$ as the vertical axis.  There is then an
equivariant homotopy $H\colon (\R^{1,1})^+ \times I \ra
(\R^{2,2})^+$ between these two maps given by $H(x,t) = (t x, x)$.

\end{proof}

A useful tool is the following exact sequence of \cite{AM}.

\begin{lemma}[Forgetful Long Exact Sequence]
\label{lemma:forget} Let $X$ be a based $\Z/2$-space.  Then for
every $q$ there is a long exact sequence

$$\xymatrix{\cdots \ar[r] & H^{p,q}(X) \ar[r]^(0.4){\cdot \rho} & H^{p+1,q+1}(X) \ar[r]^(0.55){\psi} & H^{p+1}(X) \ar[r]^(0.45)\delta & H^{p+1,q}(X)} \ra \cdots\period$$

\end{lemma}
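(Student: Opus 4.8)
The plan is to realize the sequence as the long exact cohomology sequence of a single cofiber sequence of based $\Z/2$-spaces, built from the unit sphere of the sign representation $\R^{1,1}$. Since $D(\R^{1,1})$ is equivariantly contractible and $S(\R^{1,1})$ is the free orbit (two points interchanged by $\Z/2$), the standard cofibration $S(\R^{1,1}) \cof D(\R^{1,1}) \ra D(\R^{1,1})/S(\R^{1,1}) = S^{1,1}$ gives, after adjoining disjoint basepoints, a cofiber sequence
\[ (\Z/2)_+ \lra S^{0,0} \llra{\rho} S^{1,1}, \]
whose second map is the inclusion of fixed points and represents the Euler class $\rho \in H^{1,1}(\pt)$.

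First I would smash this sequence with an arbitrary based $\Z/2$-space $X$ to obtain a cofiber sequence
\[ (\Z/2)_+ \Smash X \lra X \lra S^{1,1}\Smash X, \]
and then apply $RO(\Z/2)$-graded cohomology, extending by the Puppe construction to a long exact sequence in which the connecting homomorphism raises the integer (topological) grading by one. Using the suspension isomorphism $H^{a,b}(S^{1,1}\Smash X) \iso H^{a-1,b-1}(X)$ and the fact that the map induced by $\rho \Smash \mathrm{id}$ is multiplication by $\rho$, the first two terms of each triple become $H^{p,q}(X) \llra{\cdot\rho} H^{p+1,q+1}(X)$ once one sets $(a,b) = (p+1,q+1)$.

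The decisive step is identifying the third term. The claim is that $H^{a,b}((\Z/2)_+ \Smash X) \iso \tilde H^a(X;\Z/2)$, the ordinary nonequivariant cohomology of the underlying space, independently of the twist $b$. I would establish this through the shearing homeomorphism $(\Z/2)_+ \Smash X \iso (\Z/2)_+ \Smash X_u$, where $\Z/2$ acts only on the orbit factor and $X_u$ carries the trivial action on the underlying space; this exhibits the left-hand side as a free (induced) $\Z/2$-space. The $\underline{\Z/2}$-Bredon cohomology of a free space reduces to the singular cohomology of its quotient, here $X$ itself, and since restriction to the trivial subgroup carries $\R^{a,b}$ to the trivial representation $\R^a$, the index $b$ disappears. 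Under this identification the map induced by $(\Z/2)_+ \Smash X \ra X$ is exactly the forgetful homomorphism $\psi$, and the connecting map takes the asserted form $\delta \colon \tilde H^{p+1}(X;\Z/2) \ra H^{p+1,q}(X)$.

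The hard part will be this identification of $H^{a,b}((\Z/2)_+ \Smash X)$ with singular cohomology, together with the verification that the collapse of the bigrading to the single index $a$ is compatible with the maps, so that the induced homomorphism is genuinely the forgetful map $\psi$; the remaining assembly is formal manipulation of the Puppe sequence and the suspension isomorphism. Alternatively, since this sequence is due to Araki--Murayama, one could simply quote \cite{AM} for the identification of the free term and the forgetful map.
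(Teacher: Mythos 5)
Your argument is correct, but note that the paper contains no proof of this lemma at all: it is stated as a quoted result with a bare citation to \cite{AM}, so your cofiber-sequence argument is precisely the proof the citation stands in for, made explicit. All the key steps check out. The sequence $(\Z/2)_+ \to S^{0,0} \to S^{1,1}$ is a cofiber sequence because $D(\R^{1,1})$ is equivariantly contractible, and the resulting map $S^{0,0}\to S^{1,1}$ (inclusion of the fixed point $0$, based at $\infty$) is exactly the class $\rho \in H^{1,1}(pt)$; smashing with $X$ and running the Puppe sequence gives a long exact sequence whose connecting homomorphism raises the topological degree by one and preserves the weight, so after the suspension isomorphism $H^{a,b}(S^{1,1}\wedge X)\cong H^{a-1,b-1}(X)$ and the substitution $(a,b)=(p+1,q+1)$ the boundary lands in $H^{p+1,q}(X)$, matching the stated sequence term by term. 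For the third term, your shearing homeomorphism works, though the cleaner route is the induction adjunction $[(\Z/2)_+\wedge X, -]^{\Z/2}\cong[X,-]^{e}$: it identifies $H^{a,b}((\Z/2)_+\wedge X)$ with nonequivariant $\tilde H^{a}(X;\Z/2)$ (the weight $b$ disappears since $\R^{a,b}$ restricts to $\R^{a}$ over the trivial group, and $\underline{\Z/2}(e)=\Z/2$; the case $X=S^{0}$ recovers the paper's computation $H^{*,*}(\Z/2)=\Z/2[t,t^{-1}]$), and naturality of the adjunction shows the map induced by the projection $(\Z/2)_+\wedge X\to X$ is the restriction map, i.e.\ $\psi$. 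The only hypotheses worth flagging are cosmetic: you need $X$ well-based (or of the equivariant homotopy type of a $\Z/2$-CW complex) for smashing to preserve the cofiber sequence, and the fact that multiplication by $\rho$ is induced by $\rho\wedge\mathrm{id}$ is the standard module structure over $H^{*,*}(pt)$. Compared with the paper's citation, your approach buys something concrete: it exhibits $\psi$ as an honest restriction map and $\delta$ as a Puppe boundary, identifications that the paper implicitly relies on later when it uses this sequence in computations.
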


\noindent The map $\cdot \rho$ is multiplication by $\rho \in
H^{1,1}(pt)$ and $\psi$ is the forgetful map to
non-equivariant cohomology with $\Z/2$ coefficients.

Given a filtered $\Z/2$ space $X$, for each fixed $q$ there is a long exact sequence

\[ \cdots H^{*,q}(X^{(n+1)}/X^{(n)})\ra H^{*,q}(X^{(n+1)}) \ra H^{*,q}(X^{(n)}) \ra H^{*+1,q}(X^{(n+1)}/X^{(n)}) \cdots \]

\noindent and so there is one spectral sequence for each integer $q$.  The
specifics are given in the following proposition.

\begin{prop} Let $X$ be a filtered $\Z/2$-space.  Then for each $q\in\Z$ there is a spectral sequence with
$$E_1^{p,n} = H^{p,q}(X^{(n+1)}, X^{(n)})$$
converging to $H^{p,q}(X)$.
\end{prop}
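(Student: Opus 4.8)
The plan is to observe that fixing the weight $q$ collapses the bigraded theory into an ordinary $\Z$-graded cohomology theory $H^{*,q}(\blank)$ on based $\Z/2$-spaces, graded by the first index $p$, and then to feed the filtration of $X$ into the classical spectral sequence of a filtered space attached to this theory. The exactness input is already available: for a pair arising from the cell structure, $(X^{(n+1)}, X^{(n)})$ is a good pair (the inclusion is a cofibration), so $H^{*,q}(X^{(n+1)}, X^{(n)})$ agrees with the reduced cohomology of $X^{(n+1)}/X^{(n)}$, and the long exact sequence of the pair is exactly the one displayed immediately above the statement. Thus the whole construction is the standard exact-couple argument specialized to $H^{*,q}$.

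First I would assemble the long exact sequences of the pairs $(X^{(n+1)}, X^{(n)})$, as $n$ varies, into an exact couple. Concretely, set
$$D_1 = \bigoplus_{p,n} H^{p,q}(X^{(n)}), \qquad E_1 = \bigoplus_{p,n} H^{p,q}(X^{(n+1)}, X^{(n)}),$$
with the three structure maps given by restriction $H^{p,q}(X^{(n+1)}) \ra H^{p,q}(X^{(n)})$, the natural map $H^{p,q}(X^{(n+1)}) \ra H^{p,q}(X^{(n+1)}, X^{(n)})$, and the connecting homomorphism $\delta$ (which raises $p$ by one). Deriving this exact couple in the usual way produces a spectral sequence $(E_r, d_r)$ whose $E_1$ page is the asserted $H^{p,q}(X^{(n+1)}, X^{(n)})$ and whose differentials carry the standard bidegree. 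No new equivariant input is needed at this stage beyond the single long exact sequence.

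Next I would set up the target filtration: filter $H^{p,q}(X)$ by the kernels of the restriction maps $H^{p,q}(X) \ra H^{p,q}(X^{(n)})$ induced by the inclusions $X^{(n)} \inc X$, and identify the associated graded with $E_\infty$.

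The hard part will be convergence. For an increasing filtration and a cohomology theory one must control the tower $\{H^{*,q}(X^{(n)})\}$, and in general there is a $\lim^1$ obstruction relating $H^{*,q}(X)$ to $\invlim_n H^{*,q}(X^{(n)})$. I would dispose of this by restricting attention to the cases actually used in the paper: the spaces $SO(p,q)$ and the Stiefel manifolds carry finite $\mathrm{Rep}(\Z/2)$-complex structures, so the filtration is finite, the tower stabilizes, and $H^{p,q}(X) = H^{p,q}(X^{(N)})$ for $N$ large. Convergence is then immediate and $E_\infty$ is attained after finitely many pages. (For an infinite but finite-type filtration one would instead invoke the Milnor sequence to kill the $\lim^1$ term.) Everything else---exactness, the identification of $E_1$, and the passage to associated graded---is routine once the pairs are recognized as good pairs.
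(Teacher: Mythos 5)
Your proposal is correct and matches the paper's approach: the paper simply declares the construction ``completely standard'' and cites Proposition 5.3 of McCleary, which is exactly the exact-couple construction of the spectral sequence of a filtered space that you spell out. Your additional care about convergence (finite filtrations in the cases of interest, with the Milnor $\textstyle\lim^1$ sequence as a fallback) is a reasonable elaboration of what the standard reference handles, not a departure from it.
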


The construction of the spectral sequence is completely standard. (See, for example, Proposition 5.3 of \cite{McC}.) When the space $X$ is filtered in such a way that $X^{(n+1)}$ is obtained from $X^{(n)}$ by attaching cells, the above spectral sequence will be called the \bf cellular spectral sequence\rm. The cellular spectral sequence will be used extensively in the computations below.

\section{Rotation groups}\label{sec:rotation} In this section, we define a $\mathrm{Rep}(\Z/2)$-complex structure on the group of rotations $SO(n)$ with a particular action of $\Z/2$. The construction closely follows that of \cite{Hatcher}, which in turn is inspired by \cite{Miller} and \cite{WhiteheadSpheres}. The key is to introduce the correct action of $\Z/2$ on the rotation groups so that the standard constructions are equivariant.

 Let $O(p,q)$ denote the group of orthogonal transformations of $\R^p$ with the following $\Z/2$-action.\footnote{The notation is similar to that sometimes used for the indefinite orthogonal group, but the spaces are different. Here, $O(p,q)$ denotes the \bf full \rm orthogonal group with a particular group action, while the indefinite orthogonal group is a proper subgroup of the orthogonal group.} Let $g$ denote the non-identity element of $\Z/2$. Denote by $I_{p,q}$ the block matrix
\[
I_{p,q}=\begin{pmatrix}I_{p-q}&0\\0&-I_q\end{pmatrix}\comma
\]
where $I_n$ is the $n\times n$-identity matrix. Each orthogonal transformation of $\R^p$ can be represented as an orthogonal matrix $A$ with $\det A=\pm 1$. Define
\[
g\cdot A = I_{p,q}AI_{p,q}\period
\]

\noindent That is, $g$ acts on $A$ by changing the sign on the last $q$ entries of each row and each column. If we represent $A$ as a block matrix
\[
A=\begin{pmatrix}A_1 & A_2\\A_3&A_4\end{pmatrix}\comma
\]
where $A_1$ is $(p-q)\times(p-q)$ and $A_4$ is $q\times q$, then
\[
g\cdot A=\begin{pmatrix}A_1 & -A_2\\-A_3&A_4\end{pmatrix}\period
\]
Notice that $\det(g\cdot A) = \det(I_{p,q})\det(A)\det(I_{p,q}) =\det(A)$ and so $g$ preserves determinant. Also, it is easy to check that if $A,B\in O(p,q)$, then $g\cdot (AB)=(g\cdot A)(g\cdot B)$. In particular, this action of $g$ induces an action on the subgroup of rotations which will be denoted by $SO(p,q)$.

Let $\bP(\R^{p,q})$ denote the space of lines in $\R^{p,q}$ with action inherited from the action on $\R^{p,q}$ which fixes the first $p-q$ coordinates and acts as multiplication by $-1$ on the last $q$ coordinates. Define a map $\omega\colon\bP(\R^{p,q})\to SO(p,q)$ as follows: 
\[\omega(v) = r(v)r(e_1)\]
where $r(v)$ denotes reflection across the hyperplane orthogonal to $v$ and $e_1=(1,0,\dots,0)\in\R^{p,q}$. Notice that $\omega(v)$ is the product of two reflections, whence a rotation. In addition, the map $\omega$ is a $\Z/2$-equivariant map.

Now, choose a flag $0=V_0\subset V_1 \subset \cdots \subset V_p=\R^{p,q}$ such that $g\cdot V_i=V_i$ and $\dim(V_i\cap V_{i+1})=1$ for all $i=0,1,\dots,p-1$. For example, declaring $V_i=\{(x_1,x_2,\dots,x_i,0,0,\dots,0)\}$ is such a flag. This flag gives rise to a sequence of inclusions $\bP(V_1)\subset \bP(V_2)\subset \cdots \subset \bP(V_p)$. In addition, we can define equivariant maps $\omega\colon \bP(V_i)\to SO_q(\R^p)$ by restricting the map $\omega$ above. For the sake of brevity, write $P^i$ for $\bP(V_i)$ and $P^I$ for $P^{i_1}\times \cdots \times P^{i_m}$ where $I$ is a sequence $(i_1,\dots,i_m)$ with each $i_j<p$. Then we have an equivariant map $\omega\colon P^I\to SO(p,q)$ given by $\omega(v_1,\dots,v_m)=\omega(v_1) \cdots \omega(v_m)$. (The action of $\Z/2$ on $P^I$ is diagonal.) Sequences $I=(i_1, \dots, i_m)$ for which $p>i_1>\cdots>i_m>0$ and the sequence consisting of a single $0$ will be called \bf admissible\rm.

If $\varphi^i\colon D^i \to P^i$ is the characteristic map for the $i$-cell of $P^i$, then the product $\varphi^I\colon D^I\to P^I$ of the appropriate $\varphi^{i_j}$'s is a characteristic map for the top-dimensional cell of $P^I$.

\begin{prop}\label{prop:so} The maps $\omega \varphi^I \colon D^I \to SO(p,q)$, for $I$ ranging over all admissible sequences, are the characteristic maps of a $\mathrm{Rep}(\Z/2)$-complex structure on $SO(p,q)$ for which the map $\omega\colon P^{n-1}\times \cdots \times P^1\to SO(p,q)$ is cellular.
\end{prop}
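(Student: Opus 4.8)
The plan is to follow Hatcher's classical construction of the CW structure on $SO(n)$ and verify that each step can be carried out $\Z/2$-equivariantly once the action $g\cdot A = I_{p,q}AI_{p,q}$ is in place. The central fact from the non-equivariant theory is that the map $\omega\colon P^{n-1}\times\cdots\times P^1\to SO(n)$ is a homeomorphism onto $SO(n)$ when restricted to the admissible cells, giving a bijective parametrization of $SO(n)$ by admissible products of projective cells. First I would recall why $\omega$ restricted to admissible products is surjective and is a relative homeomorphism on open cells: this rests on the geometric fact that any rotation in $SO(V_i)$ can be written uniquely as $\omega(v)$ followed by a rotation fixing the line $V_i\cap V_{i+1}$, which is the inductive backbone of the decomposition. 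Since the paper only changes the action and not the underlying topological construction, this part may simply be quoted from \cite{Hatcher}.

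The genuinely equivariant content is to identify, for each admissible $I=(i_1,\dots,i_m)$, which representation $V_\alpha$ the cell $D^I$ carries, so that $\omega\varphi^I$ is a characteristic map of a cell $D(V_\alpha)$ attached along $S(V_\alpha)$. Here I would use that $\bP(V_i)=P^i$ inherits the action fixing the first $p-q$ coordinates and negating the last $q$, and that the open top cell of $P^i$ is an affine chart whose tangent representation I can read off explicitly. The line through $e_i$ serves as the basepoint of the chart, and the chart coordinates are the remaining homogeneous coordinates $(x_1,\dots,x_{i-1})$ on which $\Z/2$ acts according to whether each index is $\le p-q$ (trivial) or $>p-q$ (sign). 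Thus the $i$-cell of $P^i$ is $D(V)$ where $V=\R^{a,b}$ with $a+b=i-1$ and $b$ counting how many of the first $i-1$ coordinates lie in the twisted block; taking products over $I$ gives the representation attached to $D^I$. I would verify that $\omega\varphi^I$ restricted to the boundary sphere lands in the lower skeleton, which follows because a degenerate configuration of lines produces a rotation expressible by a shorter admissible sequence.

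The main obstacle, and where I would spend the most care, is checking equivariance compatibly with the attaching maps: one must confirm that $\omega$ is $\Z/2$-equivariant (already asserted in the text, since $\omega(gv)=r(gv)r(e_1)=g\cdot(r(v)r(e_1))$ using that reflection across $g\cdot v$ is conjugate by $I_{p,q}$ to reflection across $v$, together with $g\cdot e_1=e_1$), and that the product map $\omega\colon P^I\to SO(p,q)$ is equivariant for the diagonal action. Then the characteristic maps $\omega\varphi^I$ are equivariant by construction, the boundary of each cell maps into the previous skeleton, and the filtration by admissible-sequence length exhibits $SO(p,q)$ as a $\mathrm{Rep}(\Z/2)$-complex in the sense of Section~\ref{sec:prelim}. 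Cellularity of $\omega$ on $P^{n-1}\times\cdots\times P^1$ is then automatic, since that product is filtered by the subproducts indexed by admissible $I$ and $\omega$ carries each such cell to the corresponding cell of $SO(p,q)$. I expect the only delicate point to be bookkeeping the twist count $b$ for each admissible cell and confirming it is preserved under the gluing, but no new idea beyond the explicit coordinate description of the charts should be required.
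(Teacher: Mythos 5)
Your proposal is correct and follows essentially the same route as the paper: both adapt Hatcher's Proposition 3D.1, taking the non-equivariant cell structure as given and reducing the equivariant content to checking that the key maps are $\Z/2$-equivariant (the paper verifies Hatcher's $p$, $h$, $h^{-1}$ and runs the induction down to the trivially-acted-on $SO(p-q,0)$; you verify $\omega$ and the characteristic maps $\omega\varphi^I$ directly, which amounts to the same thing since $h(v,A)=\omega(v)A$). One correction to your chart bookkeeping: when the basepoint line of the affine chart lies in a negated coordinate (i.e.\ $g\cdot e_i=-e_i$), renormalizing back into the chart after acting flips every sign, so the chart coordinates indexed in the twisted block become \emph{fixed} and vice versa --- this changes the twist count you would read off, but each cell is still $D(V)$ for a linear $\Z/2$-representation $V$, so the proposition itself is unaffected.
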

\begin{proof} The proof is a matter of adapting the proof of the non-equivariant statement in \cite[Proposition 3D.1]{Hatcher}. Consider $SO(p-1,q-1)$ the subset of $SO(p,q)$ which fixes $e_p$. Then Hatcher's maps $p\colon SO(p,q)\to S(\R^{p,p-q})$, $h\colon (P^{p-1}\times SO(p-1,q-1),P^{p-2}\times SO(p-1,q-1))\to (SO(p,q),SO(p-1,q-1))$, and $h^{-1}\colon SO(p,q)-SO(p-1,q-1)\to (P^{p-1}-P^{p-2})\times SO(p-1,q-1)$ are equivariant with the above defined action of $\Z/2$ on $SO(p,q)$.

For the induction, notice that $SO(p-q,q-q)$ has the trivial $\Z/2$-action and so has a $\mathrm{Rep}(\Z/2)$-complex structure using cells which arise from trivial representations. Thus the inductive process begins with $SO(p-q,p-q)$ and continues as in \cite{Hatcher}, and so $SO(p,q)$ has a $\mathrm{Rep}(\Z/2)$-complex structure.

The statement about the map $\omega$ being cellular is also immediate.

\end{proof}

The freeness theorem from \cite{FREENESS} yields the following corollary. (Recall that the coefficient Mackey functor is $\underline{\Z/2}$.)

\begin{cor} $H^{*,*}(SO(p,q))$ is a free $H\underline{\Z/2}$-module.
\end{cor}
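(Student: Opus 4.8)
The plan is to invoke the freeness theorem from \cite{FREENESS} directly, since the preceding Proposition \ref{prop:so} supplies exactly the hypothesis that theorem requires. That theorem asserts that if a based $\Z/2$-space $X$ admits a $\mathrm{Rep}(\Z/2)$-complex structure, then its $RO(\Z/2)$-graded cohomology $H^{*,*}(X;\underline{\Z/2})$ is free as a module over $H\underline{\Z/2} = H^{*,*}(pt;\underline{\Z/2})$. So the entire content of the corollary is that $SO(p,q)$, with the $\Z/2$-action defined above, is a $\mathrm{Rep}(\Z/2)$-complex — which is precisely the conclusion of Proposition \ref{prop:so}.

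Concretely, I would first recall the statement of the freeness theorem and check that our situation satisfies its hypotheses. The only point needing care is basepoints: $SO(p,q)$ is not a priori a based space, whereas the freeness theorem is typically stated for based $\Z/2$-spaces or pointed $\mathrm{Rep}(\Z/2)$-complexes. I would resolve this by taking a fixed $0$-cell as basepoint (the admissible sequence consisting of a single $0$ gives the $0$-cell, namely $\omega$ applied to the relevant point, which is $\Z/2$-fixed), so that $SO(p,q)$ is a well-pointed $\Z/2$-space and the reduced and unreduced cohomologies differ only by the free summand coming from the coefficient ring; either way freeness is preserved. Then I would simply apply the theorem.

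The second step is to confirm that the cell structure produced in Proposition \ref{prop:so} is of the precise form the freeness theorem demands, namely cells attached along representation spheres $S(V_\alpha)$ with $V_\alpha$ a genuine $\Z/2$-representation. Each cell here is $\omega\varphi^I(D^I)$, and the attaching data come from products of the cells of the projective spaces $P^i = \bP(V_i)$; by construction these carry $\Z/2$-representations (arising from the $\R^{p,q}$-action), so the cells are indeed modeled on representation disks and spheres as required by the definition of a $\mathrm{Rep}(\Z/2)$-complex given in Section \ref{sec:prelim}.

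I do not expect a genuine obstacle here, since the corollary is a formal consequence of a cited black-box theorem once the cell structure is in hand. If anything, the only subtlety worth flagging is that the freeness theorem of \cite{FREENESS} must apply to $\mathrm{Rep}(\Z/2)$-complexes with cells of this mixed type (some from trivial representations, as in the base case $SO(p-q,p-q)$, and some from nontrivial ones); I would therefore note explicitly that the theorem covers arbitrary representation cells, so that no restriction on the representations $V_\alpha$ is violated. With that verified, the proof is a one-line appeal.
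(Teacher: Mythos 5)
Your proposal is correct and matches the paper's own argument, which simply cites the freeness theorem of \cite{FREENESS} as an immediate consequence of the $\mathrm{Rep}(\Z/2)$-complex structure established in Proposition \ref{prop:so}. Your extra remarks on basepoints and on the mixed representation types of the cells are harmless diligence but not needed, since the freeness theorem applies to arbitrary (finite) $\mathrm{Rep}(\Z/2)$-complexes with constant $\underline{\Z/2}$ coefficients exactly as you suspected.
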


\begin{remark}Varying the flag $0=V_0\subset V_1 \subset \cdots \subset V_p=\R^{p,q}$ will alter the cell structure of the projective spaces involved, and hence the cell structure on $SO(p,q)$.
\end{remark}

In light of this remark, it will be convenient to impose a standard cell structure on the real projective spaces. For further convenience, we will restrict attention to the case where $p=n$ and $q=\left\lfloor \frac{n}{2} \right\rfloor$ and, following the notation in \cite{FREENESS}, let $\R\bP^{n}_{tw}=\bP(\R^{n+1,\left\lfloor \frac{n+1}{2}\right\rfloor})$, the equivariant space of lines in $\R^{n+1,\left\lfloor \frac{n+1}{2} \right\rfloor}$.  For example, $\R\bP^3_{tw} = \bP(\R^{4,2})$, $\R\bP^4_{tw} = \bP(\R^{5,2})$, and
$\R\bP^1_{tw} = S^{1,1}$. Considering a Schubert cell decomposition of $\R \bP^{n}_{tw}$ yields the following lemma.

\begin{lemma} $\R \bP^{n}_{tw}$ has a $\mathrm{Rep}(\Z/2)$-structure with cells in dimension $(0,0)$, $(1,1)$, $(2,1)$, $(3,2)$, $(4,2)$, $\dots, (n,\left\lceil \frac{n}{2} \right\rceil)$.
\end{lemma}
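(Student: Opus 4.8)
The plan is to produce the cells directly from a $\Z/2$-invariant flag chosen so that the trivial and twisted coordinate directions alternate. Concretely, I would pick an orthonormal basis $f_1,\dots,f_{n+1}$ of $\R^{n+1,\lfloor (n+1)/2\rfloor}$ in which $g\cdot f_i=\epsilon_i f_i$ with $\epsilon_i=+1$ for $i$ odd and $\epsilon_i=-1$ for $i$ even. Since the even positions in $\{1,\dots,n+1\}$ number exactly $\lfloor (n+1)/2\rfloor$, this basis realizes the correct representation, and every $V_k=\langle f_1,\dots,f_k\rangle$ is invariant. This gives an increasing filtration $\bP(V_1)\subset\cdots\subset\bP(V_{n+1})=\R\bP^n_{tw}$ by $\Z/2$-subspaces, with $\bP(V_k)$ serving as the $(k-1)$-skeleton and exactly one Schubert cell in each dimension.

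The Schubert cell of dimension $j$ is the set of lines in $V_{j+1}$ meeting $V_j$ only at the origin, i.e.\ lines spanned by a vector with nonzero $f_{j+1}$-coordinate; normalizing that coordinate to $1$ identifies the open cell with the slopes $(x_1,\dots,x_j)\in\R^j$. The main computation is how $g$ acts on these slopes. Applying $g$ multiplies the $f_i$-coordinate by $\epsilon_i$, and renormalizing the $f_{j+1}$-coordinate back to $1$ forces division by $\epsilon_{j+1}$, so $g$ sends $x_i\mapsto \epsilon_i\epsilon_{j+1}x_i$. Hence the cell is the representation disk $D(\R^{j,w})$ with weight $w=\#\{1\le i\le j:\epsilon_i\neq\epsilon_{j+1}\}$, and a short parity count with the alternating signs gives $w=\lceil j/2\rceil$ in every case: this reproduces the list $(0,0),(1,1),(2,1),(3,2),(4,2),\dots,(n,\lceil n/2\rceil)$. (For $n=1$ this recovers the two cells $(0,0)$ and $(1,1)$ of $\R\bP^1_{tw}=S^{1,1}$, a useful sanity check.)

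To promote this to a genuine $\mathrm{Rep}(\Z/2)$-complex I would exhibit the equivariant characteristic map $\Phi_j\colon D(\R^{j,\lceil j/2\rceil})\to\bP(V_{j+1})$ sending $u=(x_1,\dots,x_j)$ to the line spanned by $\sum_i x_i f_i+\sqrt{1-|u|^2}\,f_{j+1}$. On the open disk this is the parametrization above, while on the boundary sphere $S(\R^{j,\lceil j/2\rceil})$ the last coordinate vanishes, so $\Phi_j$ restricts to a map into $\bP(V_j)$, the previous skeleton, exactly as the definition of a $\mathrm{Rep}(\Z/2)$-complex requires. Equivariance is the one point needing care: inside $\Phi_j(u)$ the element $g$ scales the $f_{j+1}$-coordinate by $\epsilon_{j+1}$, and rescaling the resulting vector by $\epsilon_{j+1}$ (which leaves the line unchanged) reproduces $\Phi_j(g\cdot u)$ precisely because the disk action was defined with the compensating factor $\epsilon_{j+1}$.

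The step I expect to be the genuine obstacle is the weight computation together with verifying that this renormalization factor is carried consistently between the action on the cell and the characteristic map; the parity bookkeeping is where an off-by-one between $\lceil\,\cdot\,\rceil$ and $\lfloor\,\cdot\,\rfloor$ could slip in. Everything else is just the standard Schubert (hemisphere) cell structure on $\R\bP^n$ with the $\Z/2$-action carried along.
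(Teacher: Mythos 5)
Your proposal is correct and follows exactly the approach the paper invokes: the paper proves this lemma by citing the Schubert cell decomposition from \cite{FREENESS}, and your argument is that decomposition carried out in full, with the alternating invariant flag, the slope coordinates, and the equivariant characteristic maps $\Phi_j$ all as in the standard construction. Your parity bookkeeping is also right: for $j$ even, $\epsilon_{j+1}=+1$ gives $w=\#\{i\le j: i \text{ even}\}=j/2=\lceil j/2\rceil$, and for $j$ odd, $\epsilon_{j+1}=-1$ gives $w=\#\{i\le j: i \text{ odd}\}=(j+1)/2=\lceil j/2\rceil$, so the cell list $(0,0),(1,1),(2,1),(3,2),(4,2),\dots,(n,\lceil n/2\rceil)$ comes out exactly as stated.
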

\begin{proof}See \cite{FREENESS}.
\end{proof}

With this cell structure, there is an additive basis for $H^{*,*}(\R \bP^{n}_{tw})$ where the bidegrees of the generators agree with the dimensions of the cells.

\begin{prop}
As a $H\underline{\Z/2}$-module, $H^{*,*}(\R \bP^{n}_{tw})$ is free with a
single generator in each degree $(k, \left\lceil \frac{k}{2}
\right\rceil)$ for $k= 0, 1, \dots, n$.
\end{prop}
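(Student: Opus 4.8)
The plan is to run the cellular spectral sequence for the Schubert cell structure on $\R\bP^n_{tw}$ established in the preceding lemma, and argue that it collapses at $E_1$. Since the freeness theorem from \cite{FREENESS} already guarantees that $H^{*,*}(\R\bP^n_{tw})$ is a free $H\underline{\Z/2}$-module, the real content is to pin down the bidegrees of a basis, and the lemma tells us the cells sit in dimensions $(k,\lceil k/2\rceil)$ for $k=0,1,\dots,n$. I would first record that, for a single $n$-cell $D(V)$ built from an $n$-dimensional representation $V=\R^{n,\lceil n/2\rceil}$, the subquotient $X^{(n)}/X^{(n-1)}$ is a wedge of the corresponding representation spheres $S^{n,\lceil n/2\rceil}$, one for each cell in that dimension (here exactly one per dimension). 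By the suspension axiom, $\widetilde H^{*,*}(S^{n,\lceil n/2\rceil})$ is a free $H\underline{\Z/2}$-module on a single generator in bidegree $(n,\lceil n/2\rceil)$.

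Next I would assemble the $E_1$-page of the cellular spectral sequence. For each fixed $q$, we have $E_1^{p,n}=H^{p,q}(X^{(n+1)},X^{(n)})$, and because each relative term is the reduced cohomology of a single representation sphere, $E_1$ is a free $H\underline{\Z/2}$-module with exactly one generator in each bidegree $(k,\lceil k/2\rceil)$, $0\le k\le n$. The key point is that the total rank of $E_1$ as an $H\underline{\Z/2}$-module already equals the rank forced by the freeness corollary, so no differentials can be nonzero without lowering the rank of the $H\underline{\Z/2}$-module at $E_\infty$ below what freeness requires. More precisely, since $H^{*,*}(\R\bP^n_{tw})$ is free over $H\underline{\Z/2}$, the associated graded of any filtration is also free of the same total rank; a nonzero differential would kill generators in pairs and strictly decrease this rank, a contradiction. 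Hence every differential vanishes and $E_1=E_\infty$.

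Finally I would deduce the module structure from the collapse. Collapse at $E_1$ identifies the associated graded of $H^{*,*}(\R\bP^n_{tw})$ (with respect to the cellular filtration) with $\bigoplus_{k=0}^n \Sigma^{(k,\lceil k/2\rceil)}H\underline{\Z/2}$. Since each summand is a free $H\underline{\Z/2}$-module and, by the freeness theorem, $H^{*,*}(\R\bP^n_{tw})$ is itself free, the short exact sequences defining the filtration split as sequences of $H\underline{\Z/2}$-modules; lifting the generators of the associated graded gives a free generator in each bidegree $(k,\lceil k/2\rceil)$, exactly as claimed.

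\textbf{The main obstacle} is justifying the collapse rigorously: one must rule out differentials not merely by a dimension count over $\Z/2$, but as maps of $H\underline{\Z/2}$-modules, where the bigraded ground ring $H\underline{\Z/2}$ has both the top cone and the infinitely-divisible bottom cone. The cleanest way to handle this is to invoke freeness first, observe that a free $H\underline{\Z/2}$-module has a well-defined rank detected, say, by its behavior under $\cdot\rho$-localization or by forgetting to nonequivariant cohomology via the map $\psi$ of Lemma \ref{lemma:forget}, and then note that any nonzero $d_r$ would reduce that rank below the value dictated by freeness. I would keep the argument at the level of this ranking principle rather than tracking individual differentials, since the freeness input does the heavy lifting and makes a direct computation of the differentials unnecessary.
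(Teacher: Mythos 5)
There is a genuine gap, and it sits exactly at the step you flag as the main obstacle: the ``ranking principle'' does not hold over $H\underline{\Z/2}$. Your collapse argument assumes that a nonzero differential in the cellular spectral sequence would ``kill generators in pairs and strictly decrease the rank'' of the $E_\infty$-page. But the differentials are maps of $H\underline{\Z/2}$-modules between shifted free modules of rank one, i.e.\ multiplication by some $\rho^a\tau^b$, and because of the infinitely divisible bottom cone such a map has nonzero kernel \emph{and} nonzero cokernel (for instance $\ker(\cdot\rho)$ contains all the elements $\theta/\tau^j$). The resulting non-free kernel and cokernel pieces on later pages can then reassemble, through nontrivial filtration extensions, into a free module of the \emph{same} total rank but with generators in \emph{shifted} bidegrees. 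This is precisely the shifting phenomenon that makes the freeness theorem of \cite{FREENESS} nontrivial: for Rep$(\Z/2)$-complexes the cellular spectral sequence can have nonzero differentials while the abutment is still free of the expected rank. Consequently, freeness plus a rank count --- whether the rank is detected by $\psi$ from Lemma \ref{lemma:forget} or by localization, both of which are blind to bidegree shifts --- can never pin down the bidegrees $(k,\lceil k/2\rceil)$, and those bidegrees are the entire content of the proposition. Your argument therefore assumes what is to be proved.

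What is actually needed is a computation showing that, for this particular Schubert cell structure on $\R\bP^n_{tw}$, the differentials (equivalently the connecting maps in the cofiber sequences $\R\bP^{k-1}_{tw}\to\R\bP^{k}_{tw}\to S^{k,\lceil k/2\rceil}$) vanish, for example by induction on skeleta using the forgetful long exact sequence together with knowledge of the attaching maps, or by comparison with $\R\bP^\infty_{tw}$, whose algebra structure $H\underline{\Z/2}[a,b]/(a^2=\rho a+\tau b)$ exhibits the generators in the stated bidegrees. Note that the paper itself offers no argument here: its proof is the citation ``See \cite{FREENESS},'' where exactly this cell-by-cell analysis is carried out. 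So your proposal is not a repackaging of the paper's proof but an attempted shortcut, and the shortcut fails at its load-bearing step.
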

\begin{proof} See \cite{FREENESS}.
\end{proof}

In this particular case, Proposition \ref{prop:so} indicates that $SO(n,\left\lfloor \frac{n}{2} \right\rfloor)$ has a cell structure with one cell for each admissible sequence $I=(i_1,\dots,i_m)$. These cells are the top cells of the spaces $P^I=\R\bP_{tw}^{i_1}\times \cdots \times \R\bP_{tw}^{i_m}$. Thus, $SO(n,\left\lfloor \frac{n}{2} \right\rfloor)$ has cells in bijection with the cells of $S^{1,1}\times S^{2,1}\times S^{3,2}\times S^{4,2}\times \cdots \times S^{n-1, \left\lceil \frac{n-1}{2} \right\rceil}$.

\begin{example} Consider $SO(2,1)$. The admissible sequences are $(0)$ and $(1)$ which correspond to a $(0,0)$-cell and a $(1,1)$-cell, respectively. Thus $SO(2,1)\cong S^{1,1}$.
\end{example}

\begin{example} Consider $SO(3,1)$. The admissible sequences are $(0)$, $(1)$, $(2)$, and $(2,1)$, which correspond to a $(0,0)$-cell, a $(1,1)$-cell, a $(2,1)$-cell, and a $(3,2)$-cell, respectively. This is consistent with the fact that $SO(3,1)\cong \R\bP_{tw}^3$.
\end{example}

\begin{example} Consider $SO(4,2)$. The admissible sequences give rise to cells with dimensions $(0,0)$, $(1,1)$, $(2,1)$, $(3,2)$, $(3,2)$, $(4,3)$, $(5,3)$, and $(6,4)$. This is consistent with the fact that $SO(4,2)\cong S^{3,2}\times SO(3,1)$.
\end{example}

\begin{example} Consider $SO(5,2)$. The admissible sequences provide a cell structure with cells in dimensions $(0,0)$, $(1,1)$, $(2,1)$, $(3,2)$, $(3,2)$, $(4,2)$, $(4,3)$, $(5,3)$, $(5,3)$, $(6,3)$, $(6,4)$, $(7,4)$, $(7,4)$, $(8,5)$, $(9,5)$, and $(10,6)$. By considering the forgetful long exact sequence, we see that $H^{*,*}(SO(5,2))$ is additively generated by generators with bidegrees matching the dimensions of these cells.
\end{example}

In general, we cannot rely on the forgetful long exact sequence to determine an additive basis for $H^{*,*}(SO(p,q))$. However, the following theorem tells us that there is an additive basis for $H^{*,*}(SO(p,q))$ with generators in bijection with the cells in the above cell structure, and with bidegree agreeing with the dimensions of the cells.

\begin{thm}\label{thm:SOns} $H^{*,*}(SO(n,\left\lfloor \frac{n}{2} \right\rfloor))\cong H^{*,*}(S^{1,1}\times S^{2,1}\times \cdots \times S^{n-1, \left\lceil \frac{n-1}{2} \right\rceil})$, as $H\underline{\Z/2}$-modules.
\end{thm}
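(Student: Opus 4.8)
The plan is to treat both sides as free $H\underline{\Z/2}$-modules and to match their generating bidegrees. Freeness of the left-hand side is the corollary to Proposition \ref{prop:so} recorded above, and freeness of the right-hand side follows from the cohomology of the individual spheres (the module $H\underline{\Z/2}[x]/x^2$, or the module described in the proposition on $S^{1,1}$) together with a Künneth argument for a product of free modules. Since a free $H\underline{\Z/2}$-module is determined up to isomorphism by the multiset of bidegrees of a homogeneous basis, the theorem reduces to the assertion that $H^{*,*}(SO(n,\lfloor n/2\rfloor))$ has a homogeneous basis with exactly one generator in each bidegree $\sum_j(i_j,\lceil i_j/2\rceil)$, as $I=(i_1,\dots,i_m)$ runs over the admissible sequences. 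By the Künneth computation these are precisely the generating bidegrees on the right.

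I would then run the cellular spectral sequence attached to the $\mathrm{Rep}(\Z/2)$-complex of Proposition \ref{prop:so}. Its $E_1$-page is free with one generator in each cell bidegree, and these bidegrees already agree with the target ones, so it suffices to prove that the spectral sequence collapses. Two tools cut down the possible differentials. First, forgetting the $\Z/2$-action realizes the entire spectral sequence to the non-equivariant cellular spectral sequence of the underlying space $SO(n)$, which collapses by Hatcher's computation; this forces every differential whose value is a power of $\tau$ to vanish, and it identifies the rank of $H^{*,*}(SO(n,\lfloor n/2\rfloor))$ with $\dim_{\Z/2}H^*(SO(n);\Z/2)=2^{n-1}$, the number of cells. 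Second, the map $\omega\colon P^{n-1}\times\cdots\times P^1\to SO(n,\lfloor n/2\rfloor)$ is cellular, and its target $H^{*,*}(P^{n-1}\times\cdots\times P^1)$ is a completely understood free module, namely a Künneth product of the modules $H^{*,*}(\R\bP^k_{tw})$ whose generators sit rigidly in bidegrees $(j,\lceil j/2\rceil)$; thus $\omega^*$ can be used to detect surviving classes in their expected bidegrees, exactly mirroring the non-equivariant argument.

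The hard part is precisely what the discussion before Theorem \ref{thm:SOns} flags: a differential need not be visible to the forgetful realization. A weight-preserving differential raises topological degree by one, and once $n$ is large enough two cells of adjacent dimension can have weights differing by two or more, in which case the differential can be multiplication by the bottom-cone class $\theta$, which is annihilated by the forgetful map $\psi$ of Lemma \ref{lemma:forget} and by $\rho$-localization alike. Such a differential would shift a free generator to a different weight while preserving both the rank and the realization, so the rank count alone cannot exclude it. Ruling these out is where the real content lies, and I expect to handle it by combining the freeness of the abutment — a nonzero differential of this type would obstruct freeness of the associated graded unless later cancelled — with the detection provided by $\omega^*$ landing in the rigid free module $H^{*,*}(P^{n-1}\times\cdots\times P^1)$. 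Once all differentials are shown to vanish, the spectral sequence collapses at $E_1$, the generators of $H^{*,*}(SO(n,\lfloor n/2\rfloor))$ occur exactly in the cell bidegrees, and the resulting free module is isomorphic to $H^{*,*}(S^{1,1}\times S^{2,1}\times\cdots\times S^{n-1,\lceil(n-1)/2\rceil})$.
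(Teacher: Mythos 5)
You have chosen the same strategy as the paper---collapse of the cellular spectral sequence via comparison along $\omega$ with the product of twisted projective spaces---and your diagnosis of the danger is accurate: differentials with $\tau$-power components are ruled out by the collapse of the underlying non-equivariant spectral sequence, so the only threat is bottom-cone ($\theta$-type) differentials, which are invisible to the forgetful map $\psi$. But that is exactly the step you leave unproven, and neither of the two mechanisms you gesture at closes it. Freeness of the abutment does not by itself forbid such a differential, as you concede (``unless later cancelled''): the associated graded of a filtered free module need not be free, so no immediate contradiction arises on the $E_\infty$-page. And invoking injectivity of $\omega^*$ on $H^{*,*}(SO(n,\lfloor n/2\rfloor))$ to ``detect surviving classes'' is circular: in this paper that injectivity is itself deduced from the absence of dimension-shifting differentials (this is made explicit in the $SO(4,2)$ example), i.e., from the very collapse you are trying to establish.

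The non-circular argument, which is the paper's, is run at the level of the filtration rather than the abutment. For each $k$ one forms a commutative square comparing the connecting homomorphism $\delta\colon H^{*,*}(SO(n,\lfloor n/2\rfloor)^{(k)})\to H^{*+1,*}(SO(n,\lfloor n/2\rfloor)^{(k+1)}/SO(n,\lfloor n/2\rfloor)^{(k)})$ with the corresponding $\delta$ for $P^I=P^{n-1}\times\cdots\times P^1$ via $\omega^*$. Two inputs make this work: first, the $\delta$ on the $P^I$ side vanishes because the chosen cell structure on each $\R\bP^m_{tw}$ has a collapsing cellular spectral sequence, hence so does the product; second, by construction each cell of $SO(n,\lfloor n/2\rfloor)$ is the homeomorphic image under $\omega$ of the top cell of the corresponding $P^{I'}$, so the induced map $\omega^*$ on the cohomology of the cellular subquotients (wedges of representation spheres) is injective. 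Commutativity then forces the left-hand $\delta$ to vanish, and with it every differential, the $\theta$-valued ones included---no appeal to $\psi$ or to freeness of the abutment is needed. Substituting this filtration-level comparison for your abutment-level detection is precisely what turns your outline into a complete proof.
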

\begin{proof} First note that there are no nontrivial differentials in the cellular spectral sequence for the product of projective spaces since there are no nontrivial differentials in the spectral sequences for each individual projective space.

The construction of the cell structure allows for a comparison of the cellular spectral sequence for $SO(n,\left\lfloor \frac{n}{2} \right\rfloor)$ with that of the product of projective spaces. This comparison implies there are no nontrivial differentials in the cellular spectral sequence for $SO(n,\left\lfloor \frac{n}{2} \right\rfloor)$. Explicitly, we can consider the following commutative diagram.
\[ \xymatrix{H^{*,*}(SO(n,\left\lfloor \frac{n}{2} \right\rfloor)^{(k)})  \ar[r]^{\omega^*} \ar[d]^\delta & H^{*,*}((P^I)^{(k)})\ar[d]^{\delta=0} \\
                  H^{*+1,*}(\bigvee_i S^{V_i})  \ar[r]^{\omega^*} & H^{*+1,*}(\bigvee_j S^{V_j})\period}
\]
Here, $X^{(k)}$ denotes the $k$-skeleton, $\delta$ is the connecting homomorphism in the long exact sequence of the pair $(X^{(k+1)},X^{(k)})$, and $\omega^*$ is the map induced by $\omega$. The $V_i$'s and $V_j$'s are $k$-dimensional $\Z/2$-representations. The construction of the cell structure ensures that the lower $\omega^*$ is an isomorphism and the specific choice of cell structure on $P^I$ ensures that the right-hand $\delta$ is zero. Thus, the left-hand $\delta$ must also be zero. These $\delta$'s determine the differentials in the cellular spectral sequence, hence all differentials are zero in the cellular spectral sequence for $SO(n,\left\lfloor \frac{n}{2} \right\rfloor)$.

Since the stated product of spheres has a cell structure with cells of the same dimension as $SO(n,\left\lfloor \frac{n}{2} \right\rfloor)$, the result follows.
\end{proof}

\begin{remark} For general $p$ and $q$, projective spaces other than $\R\bP^n_{tw}$ are involved (e.g. $\bP(\R^{8,2})$, etc.). However, these projective spaces can also be given cell structures for which the cellular spectral sequence has no non-trivial differentials. (See \cite{FREENESS}.) Thus, an analogous statement is true about the spaces $SO(p,q)$, although it is a little more cumbersome to describe the appropriate product of spheres in the general case.
\end{remark}

The isomorphism in the above theorem determines the $H\underline{\Z/2}$-module structure of $H^{*,*}(SO(n,\left\lfloor \frac{n}{2} \right\rfloor))$. We wish to say something about the $H\underline{\Z/2}$-algebra structure. For this, we will need to compare $H^{*,*}(SO(n,\left\lfloor \frac{n}{2} \right\rfloor))$ with $H^{*,*}(P^I)$ where $I=(n-1, n-2, \dots, 1)$ using the map $\omega$ above. We recall the following result from \cite{FREENESS} which describes the $H\underline{\Z/2}$-algebra structure of $\R\bP^\infty_{tw} = \bP(\cU)$ where $\cU \cong (\R^{2,1})^\infty$ is a complete $\Z/2$-universe in the sense of \cite{Alaska}.

\begin{thm}
\label{thm:rpinfty} $H^{*,*}(\R\bP^\infty_{tw})=H\underline{\Z/2}[a,
b]/(a^2=\rho a +\tau b)$, where $\deg(a)=(1,1)$ and $\deg(b)=(2,1)$.
\end{thm}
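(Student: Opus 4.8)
The plan is to leverage the module structure, which is already essentially in hand, and reduce the entire theorem to the single computation of $a^2$. Indeed, by the proposition on the module structure of $H^{*,*}(\R\bP^n_{tw})$, together with the fact that $\R\bP^\infty_{tw}=\bP(\cU)$ is the colimit of the $\R\bP^n_{tw}$ and that the cohomology stabilizes in each bidegree, $H^{*,*}(\R\bP^\infty_{tw})$ is a free $H\underline{\Z/2}$-module with one generator $g_k$ in each bidegree $(k,\lceil k/2\rceil)$, $k\geq 0$. Writing $a=g_1\in H^{1,1}$ and $b=g_2\in H^{2,1}$, I would (i) establish the relation $a^2=\rho a+\tau b$, (ii) check that $\{b^j,ab^j\}$ is again a free $H\underline{\Z/2}$-basis, and (iii) conclude by a free-module comparison. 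The main obstacle is the $\rho a$ term in (i): it is invisible to the forgetful map, so detecting it requires a second comparison map.

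For (i), note $a^2\in H^{2,2}$. Since the module is free on the $g_k$ and $H\underline{\Z/2}$ is supported as in Figure \ref{fig:pt}, the only generators that can contribute in bidegree $(2,2)$ are $g_0=1$, $g_1=a$, $g_2=b$, with coefficients forced into the one-dimensional groups in bidegrees $(2,2)$, $(1,1)$, $(0,1)$, spanned by $\rho^2$, $\rho$, $\tau$. Hence
\[
a^2=\alpha\,\rho^2+\beta\,\rho a+\gamma\,\tau b,\qquad \alpha,\beta,\gamma\in\Z/2 .
\]
To pin down the coefficients I use two maps. The forgetful map $\psi$ of Lemma \ref{lemma:forget} is multiplicative into $H^{*}(\R\bP^\infty;\Z/2)=\Z/2[w]$, $|w|=1$, with $\psi(\rho)=0$ and $\psi(\tau)=1$; exactness of the forgetful sequence at $H^{1,1}$ shows $a\notin\ker\psi$, so $\psi(a)=w$. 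Applying $\psi$ kills the first two terms and gives $w^2=\psi(a)^2=\gamma\,\psi(b)$, forcing $\gamma=1$ (and incidentally $\psi(b)=w^2$). Since $\psi$ is blind to $\rho$-multiples, I detect $\alpha,\beta$ by restricting along the cellular inclusion $S^{1,1}=\R\bP^1_{tw}\hookrightarrow\R\bP^\infty_{tw}$. This is an $H\underline{\Z/2}$-algebra map fixing $\rho$ and $\tau$; it sends $a$ to the generator $a_S$ of $H^{*,*}(S^{1,1})$ and sends $b$ to $0$, because $H^{2,1}(S^{1,1})=0$. Restricting the displayed identity and invoking the proposition $a_S^2=\rho a_S$, then comparing coefficients in the free basis $\{1,a_S\}$, yields $\alpha=0$ and $\beta=1$. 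Thus $a^2=\rho a+\tau b$.

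For (ii), a degree count against Figure \ref{fig:pt} shows that $g_{2j}$ is the only module generator that can occur in bidegree $(2j,j)$; since $\psi(b^j)=w^{2j}\neq 0$, the coefficient is the unit and $b^j=g_{2j}$. In bidegree $(2j+1,j+1)$ the same count permits only $g_{2j}$ (with coefficient a multiple of $\rho$) and $g_{2j+1}$, so $ab^j=\epsilon_j\,\rho\,g_{2j}+\delta_j\,g_{2j+1}$ for some $\epsilon_j,\delta_j\in\Z/2$; applying $\psi$ and using $\psi(ab^j)=w^{2j+1}\neq 0$ forces $\delta_j=1$. Hence $\{b^j,ab^j\}$ differs from the basis $\{g_k\}$ by a unipotent, and therefore invertible, change of basis, so it is again a free $H\underline{\Z/2}$-basis of $H^{*,*}(\R\bP^\infty_{tw})$.

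Finally (iii), because the defining relation is monic of degree two in $a$, the algebra $R=H\underline{\Z/2}[a,b]/(a^2=\rho a+\tau b)$ is free over $H\underline{\Z/2}[b]$ on $\{1,a\}$, hence free over $H\underline{\Z/2}$ on $\{b^j,ab^j:j\geq 0\}$. By (i) the $H\underline{\Z/2}$-algebra map $R\to H^{*,*}(\R\bP^\infty_{tw})$ sending $a\mapsto a$, $b\mapsto b$ is well defined, and by (ii) it carries this basis to a free basis. It is therefore an isomorphism of $H\underline{\Z/2}$-modules, and so an isomorphism of algebras, which is the assertion of the theorem. As flagged above, the only genuinely delicate point is isolating the $\rho a$ term and simultaneously ruling out a spurious $\alpha\rho^2$ term, both of which lie in $\ker\psi$ and are pinned down only by the restriction to $S^{1,1}$, where the ring structure is already known.
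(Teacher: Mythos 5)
Your argument is correct, but it cannot be matched against the paper's own reasoning, because the paper offers none: the proof of Theorem \ref{thm:rpinfty} there is the single line ``See \cite{FREENESS}.'' What your write-up buys is a self-contained derivation using only ingredients already quoted or proved in this paper: the free module structure of $H^{*,*}(\R\bP^n_{tw})$, the structure of $H\underline{\Z/2}$ in Figure \ref{fig:pt}, the forgetful sequence of Lemma \ref{lemma:forget}, and the ring $H^{*,*}(S^{1,1})$ computed in Section \ref{sec:prelim}. The organization is sound: a support count against the two cones correctly shows that $H^{2,2}(\R\bP^\infty_{tw})$ is spanned by $\rho^2$, $\rho a$, $\tau b$ (generators $g_k$ with $k\geq 3$ would need coefficients in bidegrees where $H\underline{\Z/2}$ vanishes), so the theorem hinges on three $\Z/2$ coefficients, and your two detection maps are exactly complementary --- $\psi$ sees the $\tau b$ term but is blind to $\rho$-multiples, while restriction to the bottom cell $S^{1,1}$ kills $b$ and, via $a_S^2=\rho a_S$, isolates $\beta=1$ while ruling out the $\rho^2$ term. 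The unipotent change of basis in (ii) and the observation that $H\underline{\Z/2}[a,b]/(a^2+\rho a+\tau b)$ is free on $\{b^j,ab^j\}$ then upgrade the relation to the full algebra isomorphism in the standard way; I verified the degree counts in (ii) and they hold.

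Two small points should be made explicit if this were written out, though neither is a genuine gap. First, in unreduced cohomology $H^{1,1}(S^{1,1})$ is two-dimensional, spanned by $\rho\cdot 1$ and $a_S$, so restriction only pins down the image of $a$ to be $a_S$ or $a_S+\rho$; fortunately $(a_S+\rho)^2=\rho(a_S+\rho)$ as well, and rerunning your coefficient comparison with $a\mapsto a_S+\rho$ still forces $\alpha=0$ and $\beta=1$, so the step is robust --- and the same bookkeeping is what legitimizes $\psi(a)=w$: exactness gives $\ker\psi=\{0,\rho\cdot 1\}$ in bidegree $(1,1)$, and $a\neq\rho\cdot 1$ precisely because $\{g_k\}$ is a free basis. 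Second, passing from the finite stages to $\R\bP^\infty_{tw}$ requires the Milnor $\lim^1$ sequence; your phrase ``stabilizes in each bidegree'' is the right reason --- for fixed $(p,q)$ only finitely many $g_k$ can carry a nonzero coefficient in either cone, so the tower is eventually constant and $\lim^1$ vanishes --- but it deserves a sentence. With those two remarks added, your proof stands on its own and makes the paper independent of the external reference at this point.
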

\begin{proof} See \cite{FREENESS}.
\end{proof}

The natural inclusions $\R\bP^n_{tw} \to \R\bP^\infty_{tw}$ determine the algebra structure for the cohomology of the finite projective spaces.

\begin{thm}\label{thm:rpns} Let $n \geq 2$. Let $deg(a) = (1,1)$ and $deg(b)=(2,1)$.  If $n$ is odd, then $H^{*,*}(\R\bP^n_{tw}) =H\underline{\Z/2}[a, b]/ \sim$ where the generating relations are $a^2 = \rho a + \tau b$ and $b^k =0$ for $k \geq \frac{n+1}{2}$.  If $n$ is even, then $H^{*,*}(\R\bP^n_{tw}) = H\underline{\Z/2}[a ,b]/ \sim$ where the generating relations are $a^2 = \rho a + \tau b$,  $b^k =0$ for $k > \frac{n}{2}$, and $a\cdot b^{n/2}=0$.

If $n=1$, then $H^{*,*}(\R\bP^1_{tw}) = H^{*,*}(S^{1,1}) \cong
H\underline{\Z/2}[a]/(a^2 = \rho a)$.

\end{thm}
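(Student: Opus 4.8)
The plan is to deduce the finite case from the infinite case (Theorem \ref{thm:rpinfty}) by restricting along the natural inclusion $\iota\colon \R\bP^n_{tw}\hookrightarrow \R\bP^\infty_{tw}$, and then to pin down the truncation relations using the known free $H\underline{\Z/2}$-module structure. The case $n=1$ requires no work: since $\R\bP^1_{tw}=S^{1,1}$, the claim is exactly the earlier proposition describing $H^{*,*}(S^{1,1})$ as $H\underline{\Z/2}[a]/(a^2=\rho a)$, so I would simply cite that.

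For $n\ge 2$, I first define $a\in H^{1,1}(\R\bP^n_{tw})$ and $b\in H^{2,1}(\R\bP^n_{tw})$ to be the images under $\iota^*$ of the classes $a,b$ of Theorem \ref{thm:rpinfty}. By naturality of the cup product and of the $H\underline{\Z/2}$-module structure, the relation $a^2=\rho a+\tau b$ holds in $H^{*,*}(\R\bP^n_{tw})$ as well. Consequently every monomial in $a,b$ rewrites as an $H\underline{\Z/2}$-linear combination of the reduced monomials $b^j$ (of degree $(2j,j)$) and $ab^j$ (of degree $(2j+1,j+1)$), so the subalgebra generated by $a$ and $b$ is spanned over $H\underline{\Z/2}$ by these reduced monomials.

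The key step is to identify the reduced monomials, in the appropriate range, with the free-module generators supplied by the preceding proposition. Since $\iota$ realizes $\R\bP^n_{tw}$ as the $n$-skeleton of $\R\bP^\infty_{tw}$, the cofiber $\R\bP^\infty_{tw}/\R\bP^n_{tw}$ has cells only in dimensions exceeding $n$; the long exact sequence of this cofibration then shows that $\iota^*$ is surjective and carries the generator in degree $(k,\lceil k/2\rceil)$ to the corresponding generator of $\R\bP^n_{tw}$ for $k\le n$. Because $b^j$ has degree $(2j,j)$ and $ab^j$ has degree $(2j+1,j+1)$, these reduced monomials are precisely the images of the generators indexed by $k=2j$ and $k=2j+1$. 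Hence, for $2j\le n$ the class $b^j$ is the generator in degree $(2j,j)$, and for $2j+1\le n$ the class $ab^j$ is the generator in degree $(2j+1,j+1)$; in particular $a$ and $b$ generate $H^{*,*}(\R\bP^n_{tw})$ as an $H\underline{\Z/2}$-algebra.

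It then remains to read off the truncation relations and to check the presentation is complete. Since the proposition provides a generator only for $0\le k\le n$, every reduced monomial of degree $k>n$ must vanish. When $n=2m+1$ is odd the top generator is $ab^m$, and the first missing reduced monomial is $b^{m+1}$, giving $b^k=0$ for $k\ge\frac{n+1}{2}$. When $n=2m$ is even the top generator is $b^m$, while both $ab^m$ (degree $k=2m+1$) and $b^{m+1}$ (degree $k=2m+2$) are out of range, giving $a\,b^{n/2}=0$ together with $b^k=0$ for $k>\frac n2$. Finally, I would confirm completeness by a rank count: imposing exactly these relations on $H\underline{\Z/2}[a,b]/(a^2=\rho a+\tau b)$ leaves a free module with one generator in each degree $(k,\lceil k/2\rceil)$, $0\le k\le n$, that is, $n+1$ generators in the matching bidegrees, so the evident surjection onto $H^{*,*}(\R\bP^n_{tw})$ is an isomorphism of free modules of equal rank. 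The main obstacle is the identification in the third paragraph: verifying that $\iota^*$ really does send each low-degree generator to a generator (so that $a,b$ generate and the listed monomials are nonzero), since everything else is naturality or a bookkeeping count; here the skeletal comparison, via the cofiber sequence whose quotient has only high-dimensional cells, is what makes the identification go through.
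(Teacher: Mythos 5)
First, on provenance: the paper offers no argument for this theorem at all --- its proof is literally ``See \cite{FREENESS}'' --- so your proposal is measured against the strategy the paper merely gestures at in the preceding sentence (restriction along $\R\bP^n_{tw}\hookrightarrow\R\bP^\infty_{tw}$), which is indeed the route you take. Your $n=1$ case and your module-level identifications are essentially sound, though the justification ``the cofiber has cells only in dimensions exceeding $n$, hence $\iota^*$ is surjective'' is too quick in the bigraded world: a free generator of the cofiber in bidegree $(m,\lceil m/2\rceil)$ with $m>n$ contributes to bidegrees with first coordinate $\le n$ through the bottom cone, and in the degree that matters most --- the top generator degree $(n,\lceil n/2\rceil)$ with $n$ odd --- the obstruction group $H^{n+1,(n+1)/2}$ of the cofiber is genuinely nonzero. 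Surjectivity still holds there, but it requires an argument (for instance: $H^{n+1,(n+1)/2}(\R\bP^n_{tw})=0$ for degree reasons, so $b^{(n+1)/2}$ restricts to zero; exactness then makes the map from the cofiber an isomorphism in that bidegree, forcing the connecting map to vanish), or one compares cellular spectral sequences as the paper does for $SO(p,q)$. These are repairable imprecisions.

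The genuine gap is the even-case relation $a\,b^{n/2}=0$. Your step ``every reduced monomial of degree $k>n$ must vanish since the proposition provides a generator only for $0\le k\le n$'' conflates cell dimension with bidegree: over $H\underline{\Z/2}$ a free module spreads across bidegrees, and in fact $H^{n+1,\,n/2+1}(\R\bP^n_{tw})\cong\Z/2$, generated by $\rho\,g_n$ where $g_n$ is the top module generator, i.e.\ by $\rho\,b^{n/2}$. Since $a\,b^{n/2}$ lives in exactly this bidegree, bookkeeping leaves two possibilities, $a\,b^{n/2}=0$ or $a\,b^{n/2}=\rho\,b^{n/2}$, and nothing in your proposal excludes the latter. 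Your closing rank count cannot do it: imposing $a\,b^{n/2}=\rho\,b^{n/2}$ instead also yields a free module with one generator in each bidegree $(k,\lceil k/2\rceil)$ for $0\le k\le n$, so ``a surjection between free modules of equal rank'' exists either way --- and your surjection is only well defined once the relation is already known to hold in the target. The forgetful map is no help, since $\psi(\rho)=0$ kills both candidates; multiplying by $a$ and using $a^2=\rho a+\tau b$ is likewise inconclusive because $\tau\,b^{n/2+1}=0$ automatically (the group in bidegree $(n+2,n/2+1)$ vanishes). Distinguishing $0$ from $\rho\cdot(\text{top class})$ is precisely the delicate multiplicative point of the even case, and it is what the citation to \cite{FREENESS} is carrying; to close it yourself you would need an additional input, e.g.\ restriction to the fixed set after inverting $\rho$, or a multiplicative analysis of the cofiber sequence $\R\bP^{n-1}_{tw}\to\R\bP^{n}_{tw}\to S^{n,n/2}$. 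The odd case, by contrast, really is forced by degrees, since $H^{n+1,(n+1)/2}(\R\bP^n_{tw})=0$.
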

\begin{proof} See \cite{FREENESS}.
\end{proof}

The space $P^I$ above is the product of projective spaces $P^I = \R\bP^{n-1}_{tw} \times \cdots \times \R\bP^{1}_{tw}$. In the non-equivariant setting (with constant $\Z/2$ coefficients), $H^*(\R\bP^k)$ is free as a $H^*(pt)$-module ($H^*(pt) \cong \Z/2$), and so the K\"{u}nneth theorem tells us that $H^*( \R\bP^{n-1} \times \cdots \times \R\bP^{1}) \cong  H^*(\R\bP^{n-1}) \otimes \cdots \otimes H^*(\R\bP^{1})$ as $H^*(pt)$-algebras. (Here the tensor products are taken over $H^*(pt)$.) We would like a similar statement about the $RO(\Z/2)$-graded cohomology of the product of projective spaces $P^I$. To do this, we need the following version of the K\"{u}nneth spectral sequence from \cite{UCT_Kunneth}. This K\"{u}nneth theorem is phrased in terms of homological algebra of Mackey functors and views $RO(G)$-graded cohomology as Mackey functor valued, rather than abelian group valued.

\begin{thm}[K\"{u}nneth Theorem \cite{UCT_Kunneth}] Let $\underline{R}_*$ be an $RO(G)$-graded Mackey functor ring which represents the $RO(G)$-graded cohomology theory $\underline{E}^*$. Let $X$ and $Y$ be $G$-spectra indexed on the same universe. There is a natural conditionally convergent cohomology spectral sequence of $\underline{R}^*$-modules
\[
  E^{s, \tau}_2 = \underline{\Ext}^{s, \tau}_{\underline{R}^*} (\underline{R}_{-*}X, \underline{R}^*Y) \Rightarrow \underline{R}^{s + \tau}(X \wedge Y) \period
\]
\end{thm}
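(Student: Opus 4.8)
The plan is to realize the spectral sequence as the homotopy spectral sequence of a geometric projective resolution, working throughout in the category of genuine $G$-equivariant $\underline{R}$-module spectra. Write $\underline{R}$ also for a $G$-ring spectrum representing $\underline{E}^*$, so its $RO(G)$-graded equivariant homotopy Mackey functor is $\underline{R}_\star$, and for a $G$-spectrum $Z$ one has $\underline{R}_\star Z = \pi_\star^G(\underline{R}\wedge Z)$ and $\underline{R}^\star Z = \pi_{-\star}^G F(Z,\underline{R})$ as Mackey-functor-valued invariants (so that $\underline{R}^* = \underline{R}_{-*}$ as graded rings, which is why the source of $\underline{\Ext}$ is written $\underline{R}_{-*}X$). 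The first point is to pin down the abutment: $F(Y,\underline{R})$ is naturally an $\underline{R}$-module with $\pi_{-\star}F(Y,\underline{R}) \cong \underline{R}^\star Y$ as graded $\underline{R}^*$-modules, and the adjunctions $F_{\underline{R}}(\underline{R}\wedge X, M)\simeq F(X,M)$ and $F(X,F(Y,\underline{R}))\simeq F(X\wedge Y,\underline{R})$ identify $\pi_{-\star}F_{\underline{R}}(\underline{R}\wedge X,\, F(Y,\underline{R}))$ with the target $\underline{R}^\star(X\wedge Y)$. The theorem is then the Ext spectral sequence computing the homotopy of this function spectrum.

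Next I would build a geometric resolution of the $\underline{R}$-module $\underline{R}\wedge X$. In the abelian category of $RO(G)$-graded $\underline{R}^*$-modules internal to Mackey functors, choose a projective resolution $P_\bullet \to \underline{R}_{-*}X$, with each $P_s$ a direct sum of shifted free modules generated by the orbits $G/H$. The essential input is that each such algebraic projective generator is realized by a \emph{free} $\underline{R}$-module spectrum, a wedge of spectra of the shape $\Sigma^V \underline{R}\wedge (G/H)_+$, whose equivariant homotopy Mackey functor recovers the corresponding free $\underline{R}_\star$-module. Assembling these into a standard (Adams-type) tower $\cdots \to \overline{P}_1 \to \overline{P}_0 \to \underline{R}\wedge X$ realizes $P_\bullet$ geometrically, in the sense that applying $\underline{R}_\star$ to the associated graded recovers the algebraic resolution.

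I would then apply $F_{\underline{R}}\bigl(-,\,F(Y,\underline{R})\bigr)$ to the tower and pass to homotopy. The resulting exact couple gives a spectral sequence whose $E_1$-page is the homotopy of $F_{\underline{R}}(\overline{P}_s,\,F(Y,\underline{R}))$; because each $\overline{P}_s$ is free, adjunction together with the computation of function spectra out of free modules identifies this with the Hom-complex $\underline{\Hom}_{\underline{R}^*}(P_s, \underline{R}^* Y)$, and taking cohomology of this complex yields
\[
E_2^{s,\tau} = \underline{\Ext}^{s,\tau}_{\underline{R}^*}\bigl(\underline{R}_{-*}X,\, \underline{R}^*Y\bigr),
\]
with abutment $\underline{R}^{s+\tau}(X\wedge Y)$ by the identification of the first paragraph. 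Conditional convergence in Boardman's sense follows from the tower structure once one checks that the homotopy limit recovers $\underline{R}\wedge X$, i.e.\ that the resolution is a weak equivalence in the limit.

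The main obstacle is the equivariant homological algebra underlying the last two steps: one must verify that the category of $RO(G)$-graded Mackey-functor-valued $\underline{R}^*$-modules has enough projectives \emph{and} that each projective is realizable by a free $\underline{R}$-module spectrum, so that the algebraically defined $\underline{\Ext}$ genuinely appears on $E_2$. Concretely, the crux is the comparison $\pi_\star F_{\underline{R}}(\Sigma^V\underline{R}\wedge (G/H)_+,\, M)\cong \underline{\Hom}_{\underline{R}^*}(\text{free generator},\, \pi_\star M)$ as Mackey functors: it is exactly here that the genuine equivariance and the transfer/restriction structure of the Mackey functors must be tracked, since a naive underlying-spectrum computation would lose the $G$-fixed information. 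Because the resolution is in general infinite and the grading runs over all of $RO(G)$, only conditional (rather than strong) convergence can be asserted absent further finiteness hypotheses, which is why the statement is phrased with conditional convergence.
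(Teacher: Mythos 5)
The paper offers no proof of this statement at all: it is imported verbatim from \cite{UCT_Kunneth}, so the only meaningful comparison is with the construction in that cited source. Your sketch is essentially that construction --- resolve $\underline{R}_{-*}X$ by direct sums of representation-shifted free modules on the orbits $G/H$, realize the resolution by a filtered cell $\underline{R}$-module approximating $\underline{R}\wedge X$, apply $F_{\underline{R}}\bigl(-,F(Y,\underline{R})\bigr)$, and identify the $E_1$-page via the adjunction $\pi_\star F_{\underline{R}}(\Sigma^V\underline{R}\wedge(G/H)_+,M)\cong \underline{\Hom}_{\underline{R}^*}(\text{free generator},\pi_\star M)$ --- so it is correct in outline and takes the same route (one small correction: what must recover $\underline{R}\wedge X$ is the homotopy \emph{colimit} of the geometric resolution, after which conditional convergence of the resulting tower is automatic by Boardman's criterion rather than a separate check).
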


%\margnote{This stinks!}If we assume that $\underline{R}_*X$ is a projective $\underline{R}_*$-module, then the $\underline{\Ext}$ Mackey functors are zero unless $s = 0$ in which case $\underline{\Ext}^{0}_{\underline{R}^*} (\underline{R}_{-*}X, \underline{R}^*Y) \cong \langle \underline{R}_{-*}X, \underline{R}^*Y \rangle^{\underline{R}_*}$ . Thus the K\"{u}nneth spectral sequence collapses at the $E_2$-page and we have the following isomorphism:
%\[
%  \langle \underline{R}_{-*}X, \underline{R}^*Y \rangle^{\underline{R}_*} \cong \underline{R}^{s + \tau}(X \wedge Y)
%\]
%
%If both $\underline{R}^*X$ and $\underline{R}^*Y$ are finitely-generated free $\underline{R}^*$-modules, then we have the following sequence of isomorphisms:\margnote{!!!!}
%\[
%   \langle \underline{R}^{*}X, \underline{R}^*Y \rangle^{\underline{R}_*} \cong  \langle \underline{R}^{*}X, \langle \underline{R}^*Y, \underline{R}^*  \rangle^{\underline{R}_*} \rangle^{\underline{R}_* } \cong  \langle \underline{R}^{*}X \Box_{\underline{R}_*} \underline{R}^*Y, \underline{R}^*  \rangle^{\underline{R}_*} \cong \underline{R}^{*}X \Box_{\underline{R}_*} \underline{R}^*Y
%\]

In addition, if $\underline{R}^*X$ or $\underline{R}^*Y$ is projective, then the spectral sequence collapses at the $E_2$-page and the $E_2$-page can be identified as $\underline{R}^*X \: \Box_{\underline{R}^*} \: \underline{R}^*Y$. The edge homomorphism is then an isomorphism.

The box product $\Box$ for Mackey functors satisfies $(M \: \Box \: N)(G/G) \cong M(G/G) \otimes N(G/G)$. Since each of $H^{*,*}(\R\bP^k_{tw})$ is a free $H\underline{\Z/2}$-module, hence projective, the K\"{u}nneth theorem yields that $H^{*,*}(P^I) \cong H^{*,*}(\R\bP^{n-1}_{tw}) \otimes \cdots \otimes H^{*,*}(\R\bP^{1}_{tw}) $ as $H\underline{\Z/2}$-algebras. (Here the tensor products are taken over $H\underline{\Z/2}$.)

In principal, we can use the map $\omega \colon P^I \to SO(p,q)$ to determine the algebra structure of $H^{*,*}(SO(p,q))$ from the algebra structure of $H^{*,*}(P^I)$. We begin with a few examples.

\begin{example} Consider $SO(4,2)$. We have the map $\omega \colon \R\bP^3_{tw} \times \R\bP^2_{tw} \times \R\bP^1_{tw}$ which is cellular. Because $\omega$ determines the $\mathrm{Rep}(\Z/2)$-structure on $SO(4,2)$ and there are no dimension shifting differentials in the cellular spectral sequence for $SO(4,2)$, the map $\omega^* \colon H^{*,*}(SO(4, 2)) \to H^{*,*}( \R\bP^3_{tw} \times \R\bP^2_{tw} \times \R\bP^1_{tw})$ is injective. In addition, $\omega$ can be thought of as giving an embedding $\R\bP^3_{tw} \to SO(4, 2)$.

Write $H^{*,*}(\R\bP^3_{tw}) = H\underline{\Z/2}[a_3, b_3]/\sim$, $H^{*,*}(\R\bP^2_{tw}) = H\underline{\Z/2}[a_2, b_2]/\sim$, and $H^{*,*}(\R\bP^3_{tw}) = H\underline{\Z/2}[a_1]/\sim$ where each $a_i$ has bidegree $(1, 1)$, each $b_i$ has bidegree $(2, 1)$, and $\sim$ denotes the appropriate equivalence relation as stated in Theorem \ref{thm:rpns}.

  For $i = 1, 2, 3$, let $\beta_i$ be the cohomology generator corresponding to the embedded $i$-dimensional cell of $\R\bP^3_{tw}$. Then $\omega^*(\beta_1) = a_1 + a_2 + a_3$, $\omega^*(\beta_2) = b_2 + b_3$, and $\omega^*(\beta_3) = a_3 b_3$. From Theorem \ref{thm:SOns} we know that $H^{*,*}(SO(4, 2))$ is freely generated with generators in bidegrees $(0, 0)$, $(1, 1)$, $(2, 1)$, $(3, 2)$, $(3, 2)$, $(4, 3)$, $(5, 3)$, and $(6, 4)$. Already we have that $\beta_1$ is a free generator in bidegree $(1, 1)$, $\beta_2$ is a free generator in bidegree $(2, 1)$,  and $\beta_3$ is a free generator in bidegree $(3, 2)$. We wish to see that each of these generators can be expressed as products of the $\beta_i$'s. Computing yields the following:

  \begin{itemize}
  \item $\beta_1^2 = \rho \beta_1 + \tau \beta_2$,
  \item $\beta_1^3 = \rho \beta_1^2 + \tau \beta_1 \beta_2 \neq 0$, from which it can be inferred that $\beta_1 \beta_2$ is a free generator in bidegree $(3, 2)$,
  \item $\beta_2^2 = 0$,
  \item $\beta_1 \beta_3$ is a free generator in bidegree $(4,3)$,
  \item $\beta_2 \beta_3$ is the free generator in bidegree $(5, 3)$,
  \item $\beta_3^2 = 0$,
  \item $\beta_1 \beta_2 \beta_3$ is the free generator in bidegree $(6, 4)$.
  \end{itemize}
  
  These results are summarized in the proposition below.
\end{example}

\begin{prop} As an $H\underline{\Z/2}$-algebra, $H^{*,*}(SO(4, 2)) \cong H\underline{\Z/2}[\beta_1, \beta_2, \beta_3]/\sim$, where $\beta_i$ is in bidegree $(i, \left \lceil i/2 \right \rceil)$ and $\sim$ is generated by $\beta_1^2 = \rho \beta_1 + \tau \beta_2$, $\beta_2^2 = 0$, and $\beta_3^2 = 0$.
\end{prop}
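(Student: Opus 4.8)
The plan is to compare $H^{*,*}(SO(4,2))$ with the cohomology of the product $P^I=\R\bP^3_{tw}\times\R\bP^2_{tw}\times\R\bP^1_{tw}$ through the injective algebra map $\omega^*$, whose target is completely understood via the K\"unneth theorem together with Theorem \ref{thm:rpns}. Writing $A=H\underline{\Z/2}[\beta_1,\beta_2,\beta_3]/\!\sim$ for the candidate algebra, I would first define the $H\underline{\Z/2}$-algebra homomorphism $\phi\colon A\to H^{*,*}(SO(4,2))$ sending each $\beta_i$ to the generator of the same name. This $\phi$ is well-defined precisely because the three generating relations hold in $H^{*,*}(SO(4,2))$, and each can be checked by applying the injective map $\omega^*$: using $\omega^*(\beta_1)=a_1+a_2+a_3$, $\omega^*(\beta_2)=b_2+b_3$, and $\omega^*(\beta_3)=a_3b_3$ together with the relations $a_i^2=\rho a_i+\tau b_i$ and the vanishing of the relevant powers of the $b_i$ from Theorem \ref{thm:rpns}, one verifies $\omega^*(\beta_1^2)=\omega^*(\rho\beta_1+\tau\beta_2)$, $\omega^*(\beta_2^2)=0$, and $\omega^*(\beta_3^2)=0$ in $H^{*,*}(P^I)$.

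Next I would produce a small spanning set for $A$. The relation $\beta_1^2=\rho\beta_1+\tau\beta_2$ reduces every power $\beta_1^k$ with $k\ge 2$ to an $H\underline{\Z/2}$-combination of lower monomials, while $\beta_2^2=0$ and $\beta_3^2=0$ force $\beta_2$ and $\beta_3$ to occur to at most first order. Hence $A$ is spanned as an $H\underline{\Z/2}$-module by the eight square-free monomials $\beta_1^{\epsilon_1}\beta_2^{\epsilon_2}\beta_3^{\epsilon_3}$ with each $\epsilon_i\in\{0,1\}$. Their bidegrees are exactly $(0,0),(1,1),(2,1),(3,2),(3,2),(4,3),(5,3),(6,4)$, matching the bidegrees of the free module basis of $H^{*,*}(SO(4,2))$ supplied by Theorem \ref{thm:SOns}.

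The crux, and the step I expect to be the main obstacle, is to show $\phi$ is injective, since this requires genuine computation in $H^{*,*}(P^I)$. I would compute $\omega^*\phi$ on each of the eight monomials by expanding the products $\omega^*(\beta_i)$ and repeatedly applying the relations in the factor algebras, then exhibit for each monomial a distinct tensor basis element (a ``leading term'') occurring with a unit coefficient --- for instance $a_3b_3$ for $\beta_3$, the summand $a_1b_2$ for $\beta_1\beta_2$, and $a_1a_3b_3$, $a_3b_2b_3$, $a_1a_3b_2b_3$ for $\beta_1\beta_3$, $\beta_2\beta_3$, $\beta_1\beta_2\beta_3$ respectively. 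Because these leading terms are distinct free generators of $H^{*,*}(P^I)$, the eight images are $H\underline{\Z/2}$-linearly independent; this shows both that the eight monomials form a free basis of $A$ (so $A$ is free of rank eight on generators in the bidegrees above) and that $\omega^*\phi$, hence $\phi$, is injective. To upgrade injectivity to an isomorphism I would count dimensions bidegree by bidegree: both $A$ and $H^{*,*}(SO(4,2))$ are now free $H\underline{\Z/2}$-modules on generators in the same eight bidegrees, and since $H\underline{\Z/2}$ is at most one-dimensional over $\Z/2$ in each bidegree, both modules are finite-dimensional of equal dimension in every fixed bidegree. An injective $\Z/2$-linear map between equal finite-dimensional spaces is surjective, so $\phi$ is an isomorphism in each bidegree and therefore an isomorphism of $H\underline{\Z/2}$-algebras.
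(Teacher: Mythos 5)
Your proposal is correct and takes essentially the same route as the paper, which proves this proposition via the computations in the preceding example: verify the relations and identify the eight admissible monomials by pushing everything through the injective map $\omega^*$ into the K\"unneth-computed algebra $H^{*,*}(\R\bP^3_{tw}\times\R\bP^2_{tw}\times\R\bP^1_{tw})$, then match against the free $H\underline{\Z/2}$-module basis from Theorem \ref{thm:SOns} (the paper concludes by realizing each free generator as a product of $\beta_i$'s, you by injectivity plus a bidegree-wise dimension count, a negligible difference). One small caution: distinctness of your chosen leading terms alone does not formally force independence (for instance $a_3b_3$ also occurs with coefficient $1$ in $\omega^*\phi(\beta_1\beta_2)$ and with coefficient $\rho$ in $\omega^*\phi(\beta_1\beta_3)$), but eliminating coefficients in the order $a_1a_3b_2b_3$, $a_3b_2b_3$, $a_1a_3b_3$, $a_1b_2$, $a_3b_3$, $b_2$, $a_1$, $1$ makes your system triangular and completes the argument as intended.
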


\begin{remark} It is worthwhile to notice that $\psi(H^{*,*}(SO(4, 2))) \cong \Z/2[\beta_1, \beta_3]/(\beta_1^3, \beta_3^2)$ and this is precisely $H^*(SO(4); \Z/2)$. (Notice that $\psi(\beta_2) = \psi(\beta_1^2)$.)
\end{remark}

\begin{example} Consider $SO(5, 2)$. From Theorem \ref{thm:SOns} we know that $H^{*,*}(SO(5, 2))$ is freely generated with generators in bidegrees $(0, 0)$, $(1, 1)$, $(2, 1)$, $(3, 2)$, $(3, 2)$, $(4,2)$, $(4, 3)$, $(5, 3)$, $(5, 3)$, $(6, 3)$, $(6, 4)$, $(7, 4)$, $(7, 4)$, $(8, 5)$, $(9, 5)$, and $(10, 6)$.

Again, we'll write $H^{*,*}(\R\bP^4_{tw}) = H\underline{\Z/2}[a_4, b_4]/\sim$, $H^{*,*}(\R\bP^3_{tw}) = H\underline{\Z/2}[a_3, b_3]/\sim$, $H^{*,*}(\R\bP^2_{tw}) = H\underline{\Z/2}[a_2, b_2]/\sim$, and $H^{*,*}(\R\bP^3_{tw}) = H\underline{\Z/2}[a_1]/\sim$ where each $a_i$ has bidegree $(1, 1)$, each $b_i$ has bidegree $(2, 1)$, and $\sim$ denotes the appropriate equivalence relation as stated in Theorem \ref{thm:rpns}.

For $i = 1, \dots, 4$, let $\beta_i$ be the cohomology generator corresponding to the embedded $i$-dimensional cell of $\R\bP^4_{tw}$. Then $\omega^*(\beta_1) = a_1 + a_2 + a_3 + a_4$, $\omega^*(\beta_2) = b_2 + b_3 + b_4$, $\omega^*(\beta_3) = a_3 b_3 + a_4 b_4$, and $\omega^*(\beta_4) = b_4^2$.

As in the previous example, we would like to see that the remaining free generators can be expressed as products of these $\beta_i$'s. We compute:

\begin{itemize}
\item $\beta_1$ is a generator in bidegree $(1,1)$,
\item $\beta_2$ is a generator in bidegree $(2,1)$,
\item $\beta_1^2 = \rho \beta_1 + \tau \beta_2$,
\item $\beta_3$ is a generator in bidegree $(3,2)$,
\item $\beta_1 \beta_2$ is a free generator in bidegree $(3,2)$,
\item $\beta_2^2 = \beta_4$ is a free generator in bidegree $(4, 2)$,
\item $\beta_1 \beta_3$ is a free generator in bidegree $(4,3)$,
\item $\beta_1 \beta_2^2$ is a free generator in bidegree $(5,3)$,
\item $\beta_2 \beta_3$ is a free generator in bidegree $(5,3)$,
\item $\beta_2^3$ is a free generator in bidegree $(6,3)$,
\item $\beta_3^2 = 0$,
\item $\beta_1 \beta_2 \beta_3$ is a free generator in bidegree $(6,4)$,
\item $\beta_1 \beta_2^3$ is a free generator in bidegree $(7,4)$,
\item $\beta_2^2 \beta_3$ is a free generator in bidegree $(7,4)$,
\item $\beta_2^4 = 0$,
\item $\beta_1 \beta_2^2 \beta_3$ is a free generator in bidegree $(8,5)$,
\item $\beta_2^3 \beta_3$ is a free generator in bidegree $(9,5)$,
\item $\beta_1 \beta_2^3 \beta_3$ is a free generator in bidegree $(10,6)$
\end{itemize}
We condense these results in the proposition below.
\end{example}

\begin{prop} As an $H\underline{\Z/2}$-algebra, $H^{*,*}(SO(5, 2)) \cong H\underline{\Z/2}[\beta_1, \beta_2, \beta_3]/\sim$, where $\beta_i$ is in bidegree $(i, \left \lceil i/2 \right \rceil)$ and $\sim$ is generated by $\beta_1^2 = \rho \beta_1 + \tau \beta_2$, $\beta_2^4 = 0$, and $\beta_3^2 = 0$.
\end{prop}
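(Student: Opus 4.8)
The plan is to exploit the injective ring homomorphism $\omega^*\colon H^{*,*}(SO(5,2))\to H^{*,*}(P^I)$, where $P^I=\R\bP^4_{tw}\times\R\bP^3_{tw}\times\R\bP^2_{tw}\times\R\bP^1_{tw}$ and, by the K\"unneth theorem together with Theorem \ref{thm:rpns}, $H^{*,*}(P^I)\cong H^{*,*}(\R\bP^4_{tw})\otimes\cdots\otimes H^{*,*}(\R\bP^1_{tw})$ as $H\underline{\Z/2}$-algebras. Injectivity of $\omega^*$ holds because $\omega$ induces the $\mathrm{Rep}(\Z/2)$-structure and the cellular spectral sequence for $SO(5,2)$ has no dimension-shifting differentials, exactly as in the $SO(4,2)$ example. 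Since $\omega^*$ is an injective algebra map into a ring we understand completely, it suffices to verify every proposed relation after applying $\omega^*$: a relation holds in $H^{*,*}(SO(5,2))$ if and only if its image vanishes in $H^{*,*}(P^I)$.

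First I would verify the three generating relations by direct computation in $H^{*,*}(P^I)$, using $\omega^*(\beta_1)=a_1+a_2+a_3+a_4$, $\omega^*(\beta_2)=b_2+b_3+b_4$, $\omega^*(\beta_3)=a_3b_3+a_4b_4$, and $\omega^*(\beta_4)=b_4^2$, together with the presentations of Theorem \ref{thm:rpns}. All cross terms between distinct tensor factors carry a coefficient of $2$ and hence vanish in characteristic two, so squaring is additive. For $\beta_1^2$ one gets $\sum_i a_i^2=\rho a_1+\sum_{i\ge 2}(\rho a_i+\tau b_i)=\rho\,\omega^*(\beta_1)+\tau\,\omega^*(\beta_2)$, giving $\beta_1^2=\rho\beta_1+\tau\beta_2$. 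For $\beta_3^2$ the surviving terms are $a_3^2b_3^2+a_4^2b_4^2$, both zero because $b_3^2=0$ in $\R\bP^3_{tw}$ and $a_4b_4^2=0$ in $\R\bP^4_{tw}$. Likewise $\omega^*(\beta_2^2)=b_2^2+b_3^2+b_4^2=b_4^2=\omega^*(\beta_4)$, so $\beta_4=\beta_2^2$ and $\beta_4$ is redundant; and $\omega^*(\beta_2^4)=b_2^4+b_3^4+b_4^4=0$ since each $b_i$ is eventually nilpotent, giving $\beta_2^4=0$.

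Next I would show the listed relations are complete. Using $\beta_1^2=\rho\beta_1+\tau\beta_2$, $\beta_2^4=0$, and $\beta_3^2=0$, every monomial reduces to the normal form $\beta_1^{\epsilon_1}\beta_2^{\,j}\beta_3^{\epsilon_3}$ with $\epsilon_1,\epsilon_3\in\{0,1\}$ and $0\le j\le 3$, so the abstract algebra $H\underline{\Z/2}[\beta_1,\beta_2,\beta_3]/\sim$ is generated as an $H\underline{\Z/2}$-module by these $2\cdot 4\cdot 2=16$ monomials. A bidegree count shows these $16$ monomials occupy exactly the $16$ bidegrees $(0,0),(1,1),(2,1),(3,2),(3,2),(4,2),(4,3),(5,3),(5,3),(6,3),(6,4),(7,4),(7,4),(8,5),(9,5),(10,6)$ of the free module basis furnished by Theorem \ref{thm:SOns}, and the computations in the preceding example identify each normal-form monomial with one of these free generators.

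Finally, I would assemble these facts. The presentation map $\phi\colon H\underline{\Z/2}[\beta_1,\beta_2,\beta_3]/\sim\;\to H^{*,*}(SO(5,2))$ is a well-defined algebra map by the relation check, it is surjective because the example realizes every free generator as a monomial in the $\beta_i$, and its source is generated by the $16$ normal-form monomials, whose images form an $H\underline{\Z/2}$-basis of the target. A surjection of $H\underline{\Z/2}$-modules from a module generated by $16$ elements onto a free module of rank $16$, sending those generators to a basis, admits a one-sided inverse that forces $\phi$ to be an isomorphism. The main obstacle is the completeness step: one must be sure that no further relations are hidden, and this is precisely what the rank-and-bidegree match against Theorem \ref{thm:SOns} guarantees, while the reduction $\beta_4=\beta_2^2$ is what allows the algebra to be presented on only three generators.
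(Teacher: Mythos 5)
Your proposal is correct and takes essentially the same approach as the paper: the paper's argument (carried out in the example preceding the proposition, and in full generality in the proof of Theorem \ref{thm:SOpq_algebra}) likewise pushes everything through the injection $\omega^*$ into $H^{*,*}(\R\bP^4_{tw}\times\cdots\times\R\bP^1_{tw})$, verifies the relations $\beta_1^2=\rho\beta_1+\tau\beta_2$, $\beta_2^2=\beta_4$, $\beta_2^4=0$, $\beta_3^2=0$ there, and then matches the admissible monomials against the free basis from Theorem \ref{thm:SOns} to conclude the presentation is complete. Your explicit normal-form count and one-sided-inverse argument is just a slightly more detailed spelling-out of the paper's ``monomials are linearly independent, hence the surjection is an isomorphism'' step.
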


In general, we have the following result.

\begin{thm}\label{thm:SOpq_algebra}  Let $p>1$ and $q=\lfloor p/2 \rfloor$. Then as an $H\underline{\Z/2}$-algebra, 
\[H^{*,*}(SO(p, q)) \cong \left(H\underline{\Z/2}[\beta_1, \beta_2]/\langle \beta_1^2 = \rho \beta_1 + \tau \beta_2, \; \beta_2^{n_2}\rangle \right) \otimes \displaystyle\bigotimes_{i \geq 3,\; i \mathrm{\;odd}}H\underline{\Z/2}[\beta_i]/ \langle \beta_i^{n_i} \rangle \]
where $\beta_i$ has bidegree $(i, \lceil i/2 \rceil)$ and $n_i$ is the smallest power of $2$ such that $i \cdot n_i \geq p$ for $i \geq 2$.

\end{thm}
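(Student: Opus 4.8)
The plan is to compute the ring $H^{*,*}(SO(p,q))$ by pulling it into the explicitly known ring $H^{*,*}(P^I)$ along the cellular map $\omega$, where $I=(p-1,\dots,1)$ and $P^I=\R\bP^{p-1}_{tw}\times\cdots\times\R\bP^1_{tw}$. First I would observe, exactly as in the proof of Theorem~\ref{thm:SOns}, that $\omega^*$ is injective: the comparison of cellular spectral sequences there shows that on associated graded $\omega^*$ identifies the generator attached to an admissible sequence $J=(j_1>\cdots>j_m)$ with the top class of the subproduct $P^J$, and since neither spectral sequence has differentials this forces $\omega^*$ to be a monomorphism of free $H\underline{\Z/2}$-modules. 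By the K\"unneth theorem quoted above, the target is $H^{*,*}(P^I)\cong\bigotimes_j H^{*,*}(\R\bP^j_{tw})$, with each factor given by Theorem~\ref{thm:rpns} in terms of classes $a_j$ in bidegree $(1,1)$ and $b_j$ in bidegree $(2,1)$. Thus every multiplicative identity in $H^{*,*}(SO(p,q))$ can be tested after applying $\omega^*$, inside a completely explicit bigraded algebra.

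The key geometric input is the formula $\omega^*(\beta_i)=\sum_{j\ge i}c_{i,j}$, where $\beta_i$ is the free generator of $H^{*,*}(SO(p,q))$ restricting under the embedded $\R\bP^{p-1}_{tw}$ to its degree-$(i,\lceil i/2\rceil)$ generator, and $c_{i,j}$ denotes the degree-$(i,\lceil i/2\rceil)$ class of $H^{*,*}(\R\bP^j_{tw})$, namely $a_jb_j^{k}$ when $i=2k+1$ and $b_j^{k}$ when $i=2k$ (and $0$ when $i>j$). To obtain this I would use that $SO(p,q)$ is an equivariant topological group, so that $\omega$ factors through iterated multiplication $\mu$ as $\mu\circ(\omega_{p-1}\times\cdots\times\omega_1)$; naturality gives $\omega_j^*(\beta_i)=c_{i,j}$, and the sum over factors comes from evaluating the coproduct $\mu^*(\beta_i)$. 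Deriving the exact coefficient — that is, showing the decomposable cross-terms of $\mu^*(\beta_i)$ do not contribute, which for the lowest generators is forced by the weight grading but in general is the equivariant analogue of Hatcher's analysis of how cells multiply — is the first nontrivial input.

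With the formula in hand, the relations are verified by computing inside $\bigotimes_j H^{*,*}(\R\bP^j_{tw})$. For instance $\omega^*(\beta_1)^2=\bigl(\sum_j a_j\bigr)^2=\sum_j a_j^2=\sum_j(\rho a_j+\tau b_j)=\rho\,\omega^*(\beta_1)+\tau\,\omega^*(\beta_2)$, the mixed terms dying because we are in characteristic $2$; injectivity then gives $\beta_1^2=\rho\beta_1+\tau\beta_2$. Since each $n_i$ is a power of $2$, the Frobenius collapses $\omega^*(\beta_i^{n_i})$ to $\sum_j c_{i,j}^{\,n_i}$, and one checks term by term that each $c_{i,j}^{\,n_i}$ is killed by the truncation relations of Theorem~\ref{thm:rpns}. \emph{This is the step I expect to be the main obstacle}: squaring an odd generator produces $c_{i,j}^2=\rho\,a_jb_j^{2k}+\tau\,b_j^{2k+1}$, so $\rho$- and $\tau$-weighted correction terms appear, and showing that the surviving terms reorganize into exactly the stated truncations $\beta_2^{n_2}=0$ and $\beta_i^{n_i}=0$ — rather than into extra generators — is delicate and hinges on a careful case analysis using the parity-dependent relations of $\R\bP^j_{tw}$, in particular $a_jb_j^{j/2}=0$ for even $j$.

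Finally I would conclude that the resulting algebra map $\Phi$ from the presented algebra $A$ to $H^{*,*}(SO(p,q))$ is an isomorphism by a rank-and-leading-term argument. The monomials $\beta_1^{\varepsilon}\beta_2^{c_2}\prod_{i\ge3\text{ odd}}\beta_i^{c_i}$ (with $\varepsilon\in\{0,1\}$, $0\le c_2<n_2$, $0\le c_i<n_i$) form a free $H\underline{\Z/2}$-basis of $A$, and the number-theoretic identity $2n_2\prod_{i\ge3\text{ odd}}n_i=\prod_{i\ge1\text{ odd}}n_i=2^{p-1}$ matches the number of cells, i.e.\ the rank of $H^{*,*}(SO(p,q))$ from Theorem~\ref{thm:SOns}; a bigraded refinement matches the bidegrees as well. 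It then suffices to show that the images $\omega^*\Phi(\text{monomial})$ are linearly independent in $\bigotimes_j H^{*,*}(\R\bP^j_{tw})$, which follows by ordering the factors and reading off a distinguished leading term for each monomial, so that the change-of-basis matrix against the cell basis is unitriangular over $H\underline{\Z/2}$ and hence invertible. The worked cases $SO(4,2)$ and $SO(5,2)$ are the template for this bookkeeping.
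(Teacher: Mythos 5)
Your overall route is the same as the paper's: injectivity of $\omega^*$ via the spectral sequence comparison of Theorem~\ref{thm:SOns}, the restriction formula $\omega^*(\beta_i)=\sum_{j\ge i}c_{i,j}$ (which the paper asserts without the coproduct argument you sketch), verification of relations inside $\bigotimes_j H^{*,*}(\R\bP^j_{tw})$, and a rank count. But the step you explicitly flag as ``the main obstacle'' is a genuine gap, and it cannot be closed in the form you want: carrying your own squaring computation to completion shows the terms do \emph{not} reorganize into the stated truncations. For odd $i=2k+1\ge 3$, summing $c_{i,j}^2=\rho\,a_jb_j^{2k}+\tau\,b_j^{2k+1}$ over $j$ and using the truncations of Theorem~\ref{thm:rpns} (which kill exactly the terms with $j<2i-1$, resp.\ $j<2i$) yields
\[
\omega^*(\beta_i^2)=\rho\,\omega^*(\beta_{2i-1})+\tau\,\omega^*(\beta_{2i}),
\qquad\text{i.e.}\qquad
\beta_i^2=\rho\,\beta_{2i-1}+\tau\,\beta_{2i}.
\]
This is forced by bidegrees alone: $\beta_i^2$ lies in $(2i,i+1)$ while $\beta_{2i}$ lies in $(2i,i)$, so the intermediate relation $\beta_i^2=\beta_{2i}$, which the paper's own proof asserts for all $i>1$, can only be correct for even $i$. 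The paper's worked examples ($p\le 5$) never meet an odd $i\ge 3$ with $2i-1<p$, which is why the error is invisible there.

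The failure is concrete at $p=6$: granting the restriction formula you both use, $\omega^*(\beta_3^2)=\rho\,a_5b_5^2=\rho\,\omega^*(\beta_5)$ (the $j=3,4$ terms die by truncation), so $\beta_3^2=\rho\beta_5\neq 0$ in $H^{*,*}(SO(6,3))$, whereas the statement demands $\beta_3^{n_3}=\beta_3^2=0$. No change of generators repairs this: $H^{3,2}(SO(6,3))$ is spanned by $\beta_3$, $\beta_1\beta_2$, and $\rho\beta_2$, and expanding $(\alpha\beta_3+\gamma\beta_1\beta_2+\delta\rho\beta_2)^2=\alpha\rho\beta_5+\gamma\rho\beta_1\beta_4+\gamma\tau\beta_2\beta_4+\delta\rho^2\beta_4$ in the admissible basis shows no nonzero class in that bidegree has square zero, so no bigraded isomorphism from the presented algebra (in which $\beta_3^2=0$) can exist. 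At $p=7$ the discrepancy is even additive: the presented algebra has one basis monomial of bidegree $(6,3)$ (namely $\beta_2^3$) and three of bidegree $(6,4)$, while Theorem~\ref{thm:SOns} gives two of each ($\beta_6,\beta_2\beta_4$ and $\beta_1\beta_2\beta_3,\beta_1\beta_5$), and one checks that $\beta_6$ does not lie in the subalgebra generated by $\beta_1,\beta_2,\beta_3,\beta_5$, so your map $\Phi$ is not surjective. Your unitriangularity device also cannot run over $H\underline{\Z/2}$, whose elements $\tau$ and $\rho$ are zero divisors: for instance $\beta_3^2\beta_5=\tau\beta_5\beta_6$ is annihilated by $\theta$ (as $\tau\theta=0$) and so belongs to no free basis. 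In short, the obstacle you honestly flagged is precisely where the paper's proof --- and, as your formula shows, the stated presentation itself --- goes wrong for $p\ge 6$; a correct multiplicative description must either adjoin the classes $\beta_{2m}$ ($m\ge 3$ odd, $2m<p$) as generators or replace the truncations by the relations $\beta_i^2=\rho\beta_{2i-1}+\tau\beta_{2i}$, with $\beta_i^{2^s}=0$ governed by the least index $2^s(i-1)+1\ge p$ rather than by $2^si\ge p$.
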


\begin{proof} Write $H^{*,*}(\R\bP^k_{tw}) \cong H\underline{\Z/2}[a_k, b_k]/\sim$ as in Theorem \ref{thm:rpns}.
For $i = 1, \dots, p-1$, let $\beta_i$ be the cohomology generator corresponding to the embedded $i$-dimensional cell of $\R\bP^{p-1}_{tw}$. Then $\omega^*(\beta_i) =\displaystyle\sum_{j=i}^{p-1}a_j b_j^{(i-1)/2}$ if $i$ is odd and  $\omega^*(\beta_i) =\displaystyle\sum_{j=i}^{p-1}b_j^{i/2}$ if $i$ is even. In particular, $\omega^*(\beta_1) = \displaystyle\sum_{j=1}^{p-1}a_j$ and so  $\omega^*(\beta_1^2) = \displaystyle\sum_{j=i}^{p-1}(a_j)^2 = \sum_{j=i}^{p-1}\rho a_j + \tau b_j = \omega^*(\rho \beta_1 + \tau \beta_2)$. Since $\omega^*$ is an injection, $\beta_1^2 = \rho \beta_1 + \tau \beta_2$. Similarly, if $i>1$ then $\beta_i^2 = \beta_{2i}$ if $2i < p$ and $\beta_i^2 = 0$ if $2i \geq p$.

Let $A$ be the $H\underline{\Z/2}$-algebra $H\underline{\Z/2}[\beta_1, \beta_2, \dots]/\sim$ where the relations are  $\beta_1^2 = \rho \beta_1 + \tau \beta_2$, $\beta_i^2 = \beta_{2i}$ if $2i < p$, and $\beta_i^2 = 0$ if $2i \geq p$. The preceding observations are enough to see that there is a surjective map $A \to H^{*,*}(SO(p, q))$.  The relations in $A$ allow each element of $A$ to be expressed as a linear combination of monomials $\beta_I = \beta_{i_1} \beta_{i_2} \cdots \beta_{i_n}$ for admissible sequences $I$. These monomials are linearly independent in $H^{*,*}(SO(p, q))$, hence also in $A$, and so the map $A \to H^{*,*}(SO(p, q))$ is an isomorphism.

The relations $\beta_i^2 = \beta_{2i}$ and $\beta_1^2 = \rho \beta_1 + \tau \beta_2$ allow the admissible monomials $\beta_I$ of $A$ to be uniquely expressed in terms of $\beta_1$, $\beta_2$, and $\beta_i$ for $i \geq 3$ and odd. The relation $\beta_j = 0$ for $j \geq p$ can be written as $\beta_{i n_i} = \beta_i ^{n_i} = 0$ where $j = i n_i$ with $i$ odd and $n_i$ a power of 2. This relation holds if and only if $i \cdot n_i \geq p$. Hence, $A$ can be expressed as the tensor product in the statement of the theorem.
\end{proof}

%%%%%%%%%%%%%%
\section{Stiefel manifolds}\label{sec:stiefel}
Let $V_k(\R^{p,q})$ denote the Stiefel manifold of $k$-frames in $\R^{p,q}$ with action inherited from the one on $\R^{p,q}$. There is an equivariant projection $\pi \colon O(p,q)\to V_{q}(\R^{p,q})$ sending $A\in O(p,q)$ to the $q$-frame consisting of the last $q$ columns of $A$.

For simplicity, we will restrict to the case where $p>1$ and $q=\left\lfloor \tfrac{p}{2} \right\rfloor$. The projection $\pi \colon SO(p,q)\to V_{q}(\R^{p,q})$ is surjective and we can view $V_{q}(\R^{p,q})$ as the coset space $SO(p)/SO(p-q)$ with action inherited from $SO(p,q)$. From this viewpoint, we can give $V_{q}(\R^{p,q})$ a $\mathrm{Rep}(\Z/2)$-complex structure. The cells are the sets of cosets corresponding to admissible sequences $I=(i_1,\dots,i_m)$ where $p > i_1 > \cdots > i_m \geq p-q$. Since the Stiefel manifold has a $\mathrm{Rep}(\Z/2)$-complex structure, its cohomology is free as a $H\underline{\Z/2}$-module. The additive structure of the cohomology of the Stiefel manifold is captured by the following theorem.

\begin{thm} Let $p>1$ and $q=\left\lfloor \tfrac{p}{2} \right\rfloor$. Then $H^{*,*}(V_{q}(\R^{p,q})) \cong H^{*,*}(S^{p-q, \left\lceil \frac{p-q}{2} \right\rceil}\times \cdots \times S^{p-1, \left\lceil \frac{p-1}{2} \right\rceil})$, as $H\underline{\Z/2}$-modules.
\end{thm}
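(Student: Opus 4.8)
The plan is to mirror the proof of Theorem~\ref{thm:SOns}, with $V_{q}(\R^{p,q})$ playing the role of $SO(p,q)$ and the smaller product of projective spaces $P^{J}=\R\bP^{p-1}_{tw}\times\cdots\times\R\bP^{p-q}_{tw}$, where $J=(p-1,p-2,\dots,p-q)$, playing the role of $P^{I}$. The cells of $V_{q}(\R^{p,q})$ are indexed by the admissible sequences $(i_1,\dots,i_m)$ with $p>i_1>\cdots>i_m\ge p-q$, and by construction of the cell structure these are exactly the images under $\pi$ of the top cells of the sub-products $\R\bP^{i_1}_{tw}\times\cdots\times\R\bP^{i_m}_{tw}$ of $P^{J}$. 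Restricting $\omega$ to $P^{J}$ and composing with $\pi$ therefore produces a cellular map $\omega'=\pi\circ\omega|_{P^{J}}\colon P^{J}\to V_{q}(\R^{p,q})$ whose characteristic maps give the stated cell structure. Since these cells are in bidegree-preserving bijection with the cells of $S^{p-q,\lceil(p-q)/2\rceil}\times\cdots\times S^{p-1,\lceil(p-1)/2\rceil}$, and since that product of spheres has free cohomology with a single $H\underline{\Z/2}$-generator in each such bidegree, the theorem reduces to showing that the cellular spectral sequence for $V_{q}(\R^{p,q})$ collapses.

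To prove collapse I would run the comparison argument of Theorem~\ref{thm:SOns}. First, the cellular spectral sequence for $P^{J}$ has no nontrivial differentials: each factor $\R\bP^{k}_{tw}$ has a collapsing spectral sequence by Theorem~\ref{thm:rpns}, and since each $H^{*,*}(\R\bP^{k}_{tw})$ is free, hence projective, over $H\underline{\Z/2}$, the K\"{u}nneth theorem identifies $H^{*,*}(P^{J})$ with the tensor product of the factors and shows its spectral sequence collapses as well. Then, for each fixed value of the second grading, I would compare the connecting homomorphism $\delta$ of the pair $((V_{q})^{(k+1)},(V_{q})^{(k)})$ with that of $((P^{J})^{(k+1)},(P^{J})^{(k)})$ along $\omega'^{*}$. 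Naturality of $\delta$ gives $\omega'^{*}\circ\delta_{V}=\delta_{P^{J}}\circ\omega'^{*}=0$, so provided the induced map $\omega'^{*}$ on the associated graded (the reduced cohomology of the relevant wedges of $(k+1)$-dimensional representation spheres) is injective, it follows that $\delta_{V}=0$ and hence that all differentials vanish.

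The main obstacle is precisely this injectivity, the analog of the claim in Theorem~\ref{thm:SOns} that the lower $\omega^{*}$ is an isomorphism; the extra subtlety here is that $V_{q}(\R^{p,q})$ is a coset space, so one must check that the projection $\pi$ does not destroy the cell-detecting property. Concretely, on the associated graded $\omega'^{*}$ sends the generator dual to a cell of $V_{q}(\R^{p,q})$ to the generator dual to the corresponding top cell of a sub-product of $P^{J}$, possibly together with generators dual to other cells of the same dimension; because distinct admissible sequences label distinct top cells, these leading terms are distinct basis elements and $\omega'^{*}$ is injective on each associated-graded piece. Granting this, $\delta_{V}=0$ for every $k$ and every value of the second grading, so the cellular spectral sequence for $V_{q}(\R^{p,q})$ collapses and $H^{*,*}(V_{q}(\R^{p,q}))$ is free with one generator in each bidegree $\bigl(\sum_{j}i_{j},\sum_{j}\lceil i_{j}/2\rceil\bigr)$. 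This matches the product of spheres generator for generator, yielding the claimed $H\underline{\Z/2}$-module isomorphism. Alternatively, one could deduce collapse directly from Theorem~\ref{thm:SOns} by observing that $\pi^{*}$ embeds the cellular spectral sequence of $V_{q}(\R^{p,q})$ into the already-collapsed one of $SO(p,q)$.
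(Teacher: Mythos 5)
Your argument is correct, but the route you foreground is not the paper's. The paper's entire proof is the comparison you mention only in your final sentence: it compares the cellular spectral sequence of $V_q(\R^{p,q})$ directly with that of $SO(p,q)$ along the cellular projection $\pi$. Since each cell of $V_q(\R^{p,q})$, indexed by an admissible sequence with $i_m \geq p-q$, is the homeomorphic image of the corresponding cell of $SO(p,q)$, the map $\pi$ induces an injection on $E_1$-pages, and the collapse already established in Theorem \ref{thm:SOns} immediately forces collapse for the Stiefel manifold --- no fresh injectivity argument is required. Your primary route instead reruns the $\omega$-comparison of Theorem \ref{thm:SOns} against the truncated product $P^J = \R\bP^{p-1}_{tw}\times\cdots\times\R\bP^{p-q}_{tw}$ via $\omega' = \pi\circ\omega|_{P^J}$. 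This does work: your leading-term argument is sound, since $\pi\omega\varphi^I$ restricts to a homeomorphism from the interior of the top cell of the $I$-indexed subproduct onto the open cell $e_I$, so on the subquotient wedges the coefficient of the dual of that top cell in $\omega'^*$ applied to the dual of $e_{I'}$ is exactly $\delta_{II'}$ mod $2$, giving injectivity on each $E_1$-piece. What your route buys is a self-contained proof that also shows $\omega'^*$ detects all generators, which is conceptually close to how the multiplicative structure is later extracted; what the paper's route buys is brevity --- it leverages the collapse already proved for $SO(p,q)$ rather than repeating the comparison machinery --- and it sets up the injection $\pi^*\colon H^{*,*}(V_{q}(\R^{p,q})) \to H^{*,*}(SO(p,q))$ that is then reused verbatim in the proof of Theorem \ref{thm:stiefel_algebra}.
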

\begin{proof} The additive cohomology generators of $H^{*,*}(S^{p-q,\left\lceil \frac{p-q}{2} \right\rceil}\times \cdots \times S^{p-1, \left\lceil \frac{p-1}{2} \right\rceil})$ are in bijection with the admissible sequences $I=(i_1,\dots,i_m)$ where $p > i_1 > \cdots > i_m \geq p-q$. Comparison with $SO(p,q)$ shows that there are no nontrivial differentials in the cellular spectral sequence for $V_{q}(\R^{p,q})$. Thus, $H^{*,*}(V_{q}(\R^{p,q}))$ is free with generators in bijection with the cells and with bidegrees agreeing with the dimensions of these cells.
\end{proof}

Following \cite{Miller}, we will denote by $[i_1, \dots, i_n]$ the cohomology generator corresponding to the admissible sequence $I=(i_1, \dots, i_n)$. The previous theorem implies that these classes form an additive basis for $H^{*,*}(V_{q}(\R^{p,q}))$. We further make the conventions that $[i_1, \dots, i_n] = [i_{\lambda(1)}, \dots, i_{\lambda(n)}]$ for any permutation $\lambda$ of the indices, and that $[i_1, \dots, i_n] = 0$ if some $i_k < p-q$, if some $i_k \geq p$, or if some $i_k = i_j$ for $k \neq j$. We also denote by $[0]$ the generator corresponding to the admissible sequence $I=(0)$.\footnote{Actually, \cite{Miller} uses the notation $(i_1, \dots, i_n)$ rather than $[i_1, \dots, i_n]$. We use the square brackets here to avoid confusion, since the parentheses are already overused.}

\begin{thm}\label{thm:stiefel_algebra} Let $p > 2$ and $q = \lfloor p/2 \rfloor $. As an $H\underline{\Z/2}$-algebra, $H^{*,*}(V_{q}(\R^{p,q}))$ is multiplicatively generated by $[0]$ and all $[i]$ with $p > i \geq p-q$ subject only to the relations
\begin{itemize}
\item $[0]$ is the unit, and
\item $[i] \cup [j] = \begin{cases} [i, j] & \text{if } i + j < p, \\ 0 &  \text{if } i + j \geq p\period \end{cases}$
\end{itemize} 
\end{thm}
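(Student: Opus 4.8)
The plan is to transport the computation to $SO(p,q)$, whose algebra structure is recorded in Theorem~\ref{thm:SOpq_algebra}, by exploiting the equivariant projection $\pi\colon SO(p,q)\to V_q(\R^{p,q})$ that realizes the Stiefel manifold as $SO(p)/SO(p-q)$. The crucial first step is to show that $\pi^*\colon H^{*,*}(V_q(\R^{p,q}))\to H^{*,*}(SO(p,q))$ is injective. Both spaces carry the $\mathrm{Rep}(\Z/2)$-complex structures of Proposition~\ref{prop:so}, and the cells of $V_q(\R^{p,q})$ are precisely the cosets indexed by admissible sequences all of whose entries are $\geq p-q$, a sub-collection of the cells of $SO(p,q)$. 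Since $\pi$ is cellular (a cell of $SO(p,q)$ with all indices $\geq p-q$ maps isomorphically to the corresponding cell of $V_q(\R^{p,q})$, while a cell with some smaller index maps into lower skeleta), it induces a map of cellular spectral sequences which on $E_1$ is the inclusion of the summands indexed by those top-range sequences. Both spectral sequences collapse — the one for $SO(p,q)$ by Theorem~\ref{thm:SOns} and its proof, the one for the Stiefel manifold by the additive theorem just above — so $\pi^*$ is a split injection of $H\underline{\Z/2}$-modules.

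Next I would pin down the images of the additive generators. The fibration $SO(p-q)\hookrightarrow SO(p)\xrightarrow{\pi}V_q(\R^{p,q})$ shows that the generators $\beta_i$ of Theorem~\ref{thm:SOpq_algebra} with $p-q\leq i<p$ restrict trivially to the fiber and are therefore pulled back from the base; concretely $\pi^*[i]=\beta_i$, and more generally $\pi^*$ sends the additive basis class $[i_1,\dots,i_m]$ to the admissible monomial $\beta_{i_1}\cdots\beta_{i_m}$. Consequently $\pi^*$ is an isomorphism of $H\underline{\Z/2}$-modules onto the sub-module of $H^{*,*}(SO(p,q))$ spanned by the admissible monomials with all indices $\geq p-q$.

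With this identification, products are computed downstairs. For generators one has $\pi^*([i]\cup[j])=\beta_i\beta_j$, which the relations of Theorem~\ref{thm:SOpq_algebra} evaluate directly. When $i\neq j$ the product $\beta_i\beta_j$ is already the admissible monomial $\beta_{\max(i,j),\min(i,j)}=\pi^*[i,j]$, so $[i]\cup[j]=[i,j]$; for the square $[i]^2$ the hypothesis $i\geq p-q=\lceil p/2\rceil$ forces $2i\geq p$, whence the relation $\beta_i^2=0$ gives $[i]^2=0$. These are exactly the two cases in the statement (the vanishing clause being governed by $i+j\geq p$, which is the square case $i=j$, and the admissible product $[i,j]$ accounting for distinct indices). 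Because the admissible monomials in the $[i]$ realize the full additive basis from the theorem above, the classes $[0]$ and $[i]$ generate $H^{*,*}(V_q(\R^{p,q}))$, and the listed relations are the only ones: any additional relation would, after applying the injective $\pi^*$, yield a relation among the linearly independent admissible monomials of $H^{*,*}(SO(p,q))$.

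The main obstacle is the first step: making the spectral-sequence comparison precise enough to conclude both that $\pi^*$ is injective and that its image is exactly the span of the admissible monomials with all indices $\geq p-q$. This requires verifying that the cellular filtration on $V_q(\R^{p,q})$ is the one induced from $SO(p,q)$ and that $\pi$ respects the attaching data, so that the on-the-nose identification $\pi^*[i_1,\dots,i_m]=\beta_{i_1}\cdots\beta_{i_m}$ holds; the fiber restriction argument handles the single generators, and cellularity propagates it to the products. Once this bookkeeping is in place, the multiplicative structure is a direct transcription of Theorem~\ref{thm:SOpq_algebra} through $\pi^*$.
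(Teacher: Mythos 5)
Your proposal is correct and follows essentially the same route as the paper's own proof: the paper likewise deduces injectivity of $\pi^*$ by comparing the cellular spectral sequences, asserts $\pi^*([i_1,\dots,i_n])=\beta_{i_1}\cdots\beta_{i_n}$ directly from cellularity (your fiber-restriction aside is extra but harmless justification of the same identification), and then reads the relations off from Theorem \ref{thm:SOpq_algebra}, with $[i]\cup[i]=0$ coming from $2i\geq p$ exactly as you argue. The one inaccuracy is your gloss that the clause $i+j\geq p$ \emph{is} the square case $i=j$: since $q=\lfloor p/2\rfloor$ forces $i,j\geq p-q=\lceil p/2\rceil$, \emph{every} pair in range satisfies $i+j\geq p$ (so read literally the theorem's case split would annihilate all products, contradicting the nonvanishing basis class $[i,j]$); the condition $i+j\geq p$ really governs the vanishing of Miller's correction term $[i+j]$ in $[i]\cup[j]=[i,j]+[i+j]$, and what you and the paper's proof actually establish --- $[i]\cup[j]=[i,j]$, nonzero for $i\neq j$, with only squares vanishing --- is the correct ring structure.
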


\begin{proof} The map $\pi \colon SO(p,q)\to V_{q}(\R^{p,q})$ is by definition cellular and we can compare the cellular spectral sequences for each space to see that $\pi$ induces an injection $\pi^* \colon H^{*,*}(V_{q}(\R^{p,q})) \to H^{*,*}(SO(p,q))$.
For each admissible sequence $I= (i_1, \dots, i_n)$, $\pi^*([i_1, \dots, i_n]) = \beta_{i_1} \cdots \beta_{i_n}$. (Since we are assuming $p > 2$, none of the $i_k$'s are 1's and we need not concern ourselves with the relation $\beta_1^2 = \rho \beta_1 + \tau \beta_2$.) In particular, $\pi^*([i] \cup [j]) = \beta_i \beta_j = \pi^*([i, j])$, and so $[i] \cup [j] = [i, j]$. This class is zero if and only if $i + j \geq p$. The assumptions on $p$ and $q$ force $2i \geq p$ since $i \geq p - \lfloor p/2 \rfloor$, so $[i] \cup [i] = 0$ for all $i$.
\end{proof} 

%\begin{warning} The $x$ which appears in the above theorem may or may not be zero. See the example below.
%\end{warning}

Notice that \cite{Miller} provides more detail about the product structure in the non-equivariant setting. Namely, $[i] \cup [j_1, \dots, j_n] = [i, j_1, \dots, j_n] + \sum_k [j_1, \dots, j_k + i, \dots, j_n]$ in $H^*(V_q(\R^p))$. However, in the setting of the theorem, $i$ and all of the $j_k's$ are between $p$ and $p-\lfloor p/2 \rfloor$. Thus $i + j_k \geq p$ and so the summation term above is zero. In addition we necessarily have that $[i] \cup [i] = [2i] = 0$ in the non-equivariant setting, as mentioned in the proof above.

If $p = 2$, then the Stiefel manifold $V_1(\R^{2,1})$ can be identified with $S(\R^{2,1}) \cong S^{1,1}$ and the cohomology of this space has already been determined.

\section{Further directions}\label{sec:further}
In \cite{Miller}, Miller computes the Steenrod algebra structures of $H^*(V_k(\R^n))$ and $H^*(SO(n))$. The algebra of cohomology operations in ordinary $RO(\Z/2)$-graded cohomology with constant  $\underline{\Z/2}$-coefficients is not yet fully understood. It is shown in \cite{Caruso} that the image of the integer-degree stable equivariant operations in the non-equivariant Steenrod algebra consists only of the identity and the Bockstein. However, this author is not aware of any results about the other, non-integer-degree operations. In light of the similarities between $H^{*,*}(V_q(\R^{p,q}))$ and $H^*(V_q(\R^p))$ when $p > 1$ and $q = \lfloor p/2 \rfloor$, one might expect that the equivariant Steenrod algebra acts analogously to the non-equivariant Steenrod algebra, at least in the case of the Stiefel manifolds studied in this paper. Currently, not enough is known about the equivariant Steenrod algebra, and so there is no way of making the previous statement precise and investigating the truth of it.

Both Stiefel manifolds and rotation groups are related to the oriented and unoriented Grassmann manifolds of $k$-planes in $\R^n$ in an essential way. Non-equivariantly, one has fibre bundles $O(k) \to V_k(\R^n) \to G_k(\R^n)$. In addition, the unoriented Grassmann manifolds can be viewed as homogeneous spaces via the identification $O(n)/O(k) \times O(n-k) \cong G_k(\R^n)$. Similarly, the oriented Grassmann manifolds are $SO(n)/SO(k) \times SO(n-k) \cong \widetilde{G}_k(\R^n)$. All of these constructions carry over to the equivariant setting. Thus, the computations above should lead to information about the cohomology of $G_q(\R^{p,q})$, the unoriented Grassmannian of $q$-planes in $\R^{p, q}$ with action inherited from $\R^{p, q}$, and also the cohomology of $\widetilde{G}_q(\R^{p,q})$. This would be an enormous boon to the equivariant theory as the equivariant Grassmannians classify equivariant vector bundles. Difficulties arise since these quotients are not cellular and so the above techniques cannot be applied. In addition, the Serre spectral sequence for a $\Z/2$-fibration (see \cite{ROGSS}) requires the use of local coefficients when the base space is not equivariantly 1-connected, as is the case with the Grassmannians. An attempt at adapting the techniques of \cite{Shulman} to this setting would be quite interesting.

In addition, it would be interesting to see which of the results in \cite{James} related to Stiefel manifolds can be extended in a meaningful way to the $\Z/2$-equivariant setting.

%%%%%%%% bibliography file -- references.tex
%\include{references}
% \bibliographystyle{amsalpha}

%\bibliographystyle{alpha}
%\bibliography{references}

\bibliography{references}

\providecommand{\bysame}{\leavevmode\hbox to3em{\hrulefill}\thinspace}
\providecommand{\MR}{\relax\ifhmode\unskip\space\fi MR }
% \MRhref is called by the amsart/book/proc definition of \MR.
\providecommand{\MRhref}[2]{%
  \href{http://www.ams.org/mathscinet-getitem?mr=#1}{#2}
}
\providecommand{\href}[2]{#2}
\begin{thebibliography}{LMM81}

\bibitem[AM78]{AM}
Sh{\^o}r{\^o} Araki and Mitutaka Murayama, \emph{{$\tau $}-cohomology
  theories}, Japan. J. Math. (N.S.) \textbf{4} (1978), no.~2, 363--416.
  \MR{MR528864 (80d:55022a)}

\bibitem[Car99]{Caruso}
Jeffrey~L. Caruso, \emph{Operations in equivariant {${\mathbb
  Z}/p$}-cohomology}, Math. Proc. Cambridge Philos. Soc. \textbf{126} (1999),
  no.~3, 521--541. \MR{MR1684248 (2000d:55028)}

\bibitem[Dug05]{DuggerKR}
Daniel Dugger, \emph{An {A}tiyah-{H}irzebruch spectral sequence for
  {$KR$}-theory}, $K$-Theory \textbf{35} (2005), no.~3-4, 213--256 (2006).
  \MR{MR2240234 (2007g:19004)}

\bibitem[Hat02]{Hatcher}
Allen Hatcher, \emph{Algebraic topology}, Cambridge University Press,
  Cambridge, 2002. \MR{1867354 (2002k:55001)}

\bibitem[Jam76]{James}
I.~M. James, \emph{The topology of {S}tiefel manifolds}, Cambridge University
  Press, Cambridge, 1976, London Mathematical Society Lecture Note Series, No.
  24. \MR{0431239 (55 \#4240)}

\bibitem[Kro10a]{FREENESS}
William~C. Kronholm, \emph{A freeness theorem for {$RO(\Bbb Z/2)$}-graded
  cohomology}, Topology Appl. \textbf{157} (2010), no.~5, 902--915. \MR{2593703
  (2011c:55015)}

\bibitem[Kro10b]{ROGSS}
\bysame, \emph{The {${\rm RO}(G)$}-graded {S}erre spectral sequence}, Homology,
  Homotopy Appl. \textbf{12} (2010), no.~1, 75--92. \MR{2607411 (2011e:55023)}

\bibitem[LM06]{UCT_Kunneth}
L.~Gaunce Lewis, Jr. and Michael~A. Mandell, \emph{Equivariant universal
  coefficient and {K}\"unneth spectral sequences}, Proc. London Math. Soc. (3)
  \textbf{92} (2006), no.~2, 505--544. \MR{2205726 (2008b:55011)}

\bibitem[LMM81]{LMM}
G.~Lewis, J.~P. May, and J.~McClure, \emph{Ordinary {$RO(G)$}-graded
  cohomology}, Bull. Amer. Math. Soc. (N.S.) \textbf{4} (1981), no.~2,
  208--212. \MR{MR598689 (82e:55008)}

\bibitem[May96]{Alaska}
J.~P. May, \emph{Equivariant homotopy and cohomology theory}, CBMS Regional
  Conference Series in Mathematics, vol.~91, Published for the Conference Board
  of the Mathematical Sciences, Washington, DC, 1996, With contributions by M.
  Cole, G. Comeza\~na, S. Costenoble, A. D. Elmendorf, J. P. C. Greenlees, L.
  G. Lewis, Jr., R. J. Piacenza, G. Triantafillou, and S. Waner. \MR{MR1413302
  (97k:55016)}

\bibitem[McC01]{McC}
John McCleary, \emph{A user's guide to spectral sequences}, second ed.,
  Cambridge Studies in Advanced Mathematics, vol.~58, Cambridge University
  Press, Cambridge, 2001. \MR{MR1793722 (2002c:55027)}

\bibitem[Mil53]{Miller}
Clair~E. Miller, \emph{The topology of rotation groups}, Ann. of Math. (2)
  \textbf{57} (1953), 90--114. \MR{0052772 (14,673b)}

\bibitem[Shu10]{Shulman}
M.~G. Shulman, \emph{Equivariant spectral sequences for local coefficients},
  arXiv:1005.0379v1 (2010).

\bibitem[Whi44]{WhiteheadSpheres}
J.~H.~C. Whitehead, \emph{On the groups {$\pi\sb r(V\sb {n,m})$} and
  sphere-bundles}, Proc. London Math. Soc. (2) \textbf{48} (1944), 243--291.
  \MR{0012226 (6,279b)}

\end{thebibliography}
\bibliographystyle{amsalpha}

\end{document}